\documentclass{birkjour}
\usepackage[noadjust]{cite}
\usepackage{amssymb}
\usepackage{enumerate}
\usepackage{verbatim}
\usepackage{bbm}
\usepackage{tikz-cd}
\usepackage{color}

\usepackage{graphicx} 
\usepackage{booktabs} 
\usepackage{color}
\usepackage{latexsym,amsmath,amscd,amssymb,graphics}
\usepackage{amsmath,amssymb,mathrsfs,framed,esint,slashed,color}
\usepackage{amsthm}
\usepackage{enumerate}
\usepackage{framed}
\usepackage[all]{xy}
\usepackage{xy}
\usepackage{stackrel}
\usepackage{graphicx}
\usepackage{url}

\usepackage{subcaption}
\usepackage{multirow}
\usepackage{pgfplots, pgfplotstable, booktabs, colortbl, array,multirow}

\usepackage{manfnt}
\usepackage[bbgreekl]{mathbbol}
\usepackage{tikz}


\newtheorem{thm}{Theorem}

\newtheorem{prop}[thm]{Proposition}
\theoremstyle{definition}

\theoremstyle{remark}
\newtheorem{rem}[thm]{Remark}

\DeclareMathOperator{\Tr}{Tr}

\renewcommand{\to}{\rightarrow}

\newcommand{\be}{\begin{equation}}
\newcommand{\ee}{\end{equation}}
\newcommand{\bse}{\begin{subequations}}
\newcommand{\ese}{\end{subequations}}
\newcommand{\ben}{\begin{enumerate}}
\newcommand{\een}{\end{enumerate}}
\newcommand{\bit}{\begin{itemize}}
\newcommand{\eit}{\end{itemize}}

\usepackage{skull}

\begin{document}
\bibliographystyle{grzyby}

\title{The restricted Siegel disc as coadjoint orbit}

\author[F. Gay-Balmaz]{Fran\c{c}ois Gay-Balmaz}
\address{\\
\begin{minipage}{0.5\textwidth}
School of Physical and  Mathematical \\
$\phantom{q}$ Sciences \\
Nanyang Technological University \\
21 Nanyang Link \\
Singapore 637371\\
\end{minipage}
\begin{minipage}{0.5\textwidth}
UMR CNRS 8539\\
Laboratoire de M\'et\'eorologie Dynamique \\
Ecole Normale Sup\'erieure  \\
24 rue Lhomond  \\
75005 Paris \\
France
\end{minipage}
}
\email{francois.gb@ntu.edu.sg, francois.gay-balmaz@lmd.ens.fr }

\thanks{FGB: Partially
supported by a Nanyang Technological University start-up grant}

\author[T.S. Ratiu]{Tudor S. Ratiu,} 
\address{\\
\begin{minipage}{0.5\textwidth}
School of Mathematical Sciences \\
Ministry of Education Laboratory \\ 
$\phantom{q}$  of Scientific Computing (MOE-LSC) \\
Shanghai Frontier Science Center \\ 
$\phantom{q}$ of Modern Analysis \\ 
Shanghai Jiao Tong University \\
800 Dongchuan Road \\ 
Minhang District, Shanghai \\
200240 China \\
\end{minipage}
\begin{minipage}{0.5\textwidth}
Section de Math\'ematiques \\
Ecole Polytechnique F\'ed\'erale \\
$\phantom{q}$ de Lausanne \\
CH-1015 Lausanne \\ Switzerland \\
\end{minipage}
}
\email{ratiu@sjtu.edu.cn, tudor.ratiu@epfl.ch}

\thanks{\noindent TSR: Partially
supported by the National Natural Science Foundation of China  grant
number 11871334 and by the Swiss National Science Foundation grant 
NCCR SwissMAP}

\author[A.B.~Tumpach]{Alice B. Tumpach}

\address{\\
\begin{minipage}{0.5\textwidth}
UMR CNRS 8524\\
UFR de Math\'ematiques\\
Laboratoire Paul Painlev\'e\\
59 655 Villeneuve d'Ascq Cedex\\ France \\
\end{minipage}
\begin{minipage}{0.5\textwidth}
Institut CNRS Pauli\\ UMI  CNRS 2842\\ 
Oskar-Morgenstern-Platz 1 \\1090 Wien\\Austria \\
\end{minipage}
}
\email{alice-barbora.tumpach@univ-lille.fr}

\thanks{\noindent ABT: Partially supported by  Fonds zur 
F\"orderung der wissenschaftlichen Forschung, Austria (number I 5015-N) 
grant ``Banach Poisson--Lie groups and integrable systems''.}

\dedicatory{Dedicated to the memory of Anatol Odzijewicz our friend\\
  and founder of this conference series}

\begin{abstract}
  The restricted Siegel disc is a homogeneous space related to the
  connected component $T_0(1)$ of the Universal Teichm\"uller space
  via the period mapping. In this paper we show that it is a coadjoint
  orbit of the universal central extension of the restricted
  symplectic group or, equivalently, an affine coadjoint orbit of the
  restricted symplectic group with cocycle given by the Schwinger
  term.
\end{abstract}
\subjclass{53D17,70H06,35F21,47B99}
\keywords{symmetric space, Siegel disc, restricted Grassmannian, 
universal Teichm\"uller space}

\maketitle

\section{Introduction}

The restricted Siegel disc contains the image of the connected 
component $T_0(1)$ of the Universal Teichm\"uller space by the 
period mapping (see \cite{TaTe2004} or~\cite{Tum17} for an overview).
We show that the restricted Siegel disc is symplectomorphic to a 
one-parameter family of coadjoint orbits of the universal central 
extension of the restricted symplectic linear group. This central 
extension is constructed using the universal central extension 
$\widetilde{\operatorname{GL}_{\rm res}}\to\operatorname{GL}_{\rm{res}}$ 
of the restricted general linear group  $\operatorname{GL}_{\rm{res}}$ 
(see \cite{PrSe1990, Wu2001, GO10}). The principal 
bundle $\widetilde{\operatorname{GL}_{\rm res}}
\to\operatorname{GL}_{\rm{res}}$ does not have a global smooth 
section, hence, as a smooth manifold, 
$\widetilde{\operatorname{GL}_{\rm{res}}}$ is not diffeomorphic 
to $\operatorname{GL}_{\rm{res}}\times\mathbb{C}^\times$ and the 
group multiplication is not described by a global smooth cocycle 
(see  section~\ref{central}). Equivalently, the restricted Siegel 
disc can be seen as an affine coadjoint orbit of the restricted 
symplectic linear group. 

The paper is organized as follows. In section~\ref{intro} we 
introduce the relevant Banach Lie groups and present some of 
their geometric properties. In section~\ref{Siegel} we define 
the restricted Siegel disc. In section~\ref{central} we present 
the relevant central extensions of Banach Lie groups. Finally, in 
section~\ref{spres}, we identify some coadjoint orbits of the 
central extension of the restricted symplectic group with the 
Siegel disc and compute the corresponding symplectic form.

\section{Some Banach Lie groups and their geometric properties}
\label{intro}
\subsection{The symplectic linear group 
$\operatorname{Sp}(\mathcal{V},\Omega)$}

Consider a real infinite-dimensional separable Hilbert space 
$\mathcal{V}$ with inner product 
$\langle\cdot, \cdot\rangle_{\mathcal{V}}$ 
and denote by $\operatorname{GL}(\mathcal{V})$ the general 
linear group of real linear bounded invertible operators 
on $\mathcal{V}$; $\operatorname{GL}(\mathcal{V})$ 
is a Banach Lie group whose Banach Lie algebra 
$\mathfrak{gl}(\mathcal{V}):= L(\mathcal{V})$ is the Banach space of 
all real linear bounded  operators on $\mathcal{V}$.
Suppose that $\mathcal{V}$ has a strong symplectic form $\Omega$, 
i.e., $\mathcal{V} \ni v \mapsto \Omega(v, \cdot) 
\in \mathcal{V}^\ast$ is a Banach space isomorphism. 
The \textbf{symplectic linear group} is 
\[
\operatorname{Sp}(\mathcal{V},\Omega)
:=\{a\in\operatorname{GL}(\mathcal{V})\mid 
\Omega(au,av)=\Omega(u,v),\;\;
\text{for all}\;\; u,v\in\mathcal{V}\}.
\]
By the Riesz theorem one can define an operator 
$J\in\operatorname{GL}(\mathcal{V})$ by
\begin{equation}\label{def de J /Omega et pdt scalaire}
\Omega(u,v)=\langle u,Jv\rangle_{\mathcal{V}}, \;\;\text{for all}
\;\; u,v\in\mathcal{V}.
\end{equation}
Note that we have $a\in\operatorname{Sp}(\mathcal{V},\Omega)$ 
if and only if $a\in \operatorname{GL}(\mathcal{V})$ and
\[
a^TJa=J.
\]
The Lie algebra of the symplectic group is
\[
\mathfrak{sp}(\mathcal{V},\Omega):=
\{A\in\mathfrak{gl}(\mathcal{V})\mid A^TJ=-JA\}.
\]
We assume, additionally, that the symplectic form $\Omega$ is
a K\"ahler form with respect to 
$\langle\cdot, \cdot\rangle_{\mathcal{V}}$;
in other words, $J$ is a complex structure compatible with 
$\langle\cdot, \cdot\rangle_{\mathcal{V}}$:
\begin{equation}\label{compatibilite de J avec pdt sca}
J^2=-\operatorname{id}_\mathcal{V}\quad \text{and}\quad 
\langle Ju,Jv\rangle_{\mathcal{V}}=\langle u,v\rangle_{\mathcal{V}}.
\end{equation}
So, if $\Omega$ is K\"ahler, we have
\[
a^TJa=J \Longleftrightarrow aJa^T=J \quad \text{and}
\quad A^TJ=-JA \Longleftrightarrow  AJ=-JA^T.
\] 
Let us now consider the complexification $\mathcal{H} := 
\mathcal{V}\otimes \mathbb{C}$ of $\mathcal{V}$ with hermitian 
scalar product
\[
\langle u, v\rangle_\mathcal{H} := 
\langle \Re u, \Re v\rangle_{\mathcal{V}} + 
\langle \Im u, \Im v\rangle_{\mathcal{V}} + 
i \langle \Im u, \Re v\rangle_{\mathcal{V}} - 
i \langle \Re u, \Im v\rangle_{\mathcal{V}};
\]
that is, $\langle \cdot, \cdot\rangle_\mathcal{H}$ is 
$\mathbb{C}$-linear in the first variable, 
$\mathbb{C}$-anti-linear in the second, and
\[
\langle u, v\rangle_\mathcal{H} = 
\overline{\langle v, u\rangle_\mathcal{H}} = 
\overline{\langle \bar{u}, \bar{v}\rangle_\mathcal{H}},
\]
where $\bar{u}:=\Re u -i\Im u$, 
$\bar{v}:=\Re v -i\Im v$.
Extend $J$ and $\Omega$ by $\mathbb{C}$-linearity to $\mathcal{H}$ 
and decompose $\mathcal{H}$ as
\[
\mathcal{H}=\mathcal{H}_+\oplus\mathcal{H}_-,\quad
\mathcal{H}_+:=\operatorname{Eig}_i(J),\quad
\mathcal{H}_-:=\operatorname{Eig}_{-i}(J),
\]
where $\operatorname{Eig}_{\pm i}(J)$ denotes the eigenspace of 
$J$ associated to $\pm i$. With respect to the decomposition 
$\mathcal{H}=\mathcal{H}_+\oplus\mathcal{H}_-$, the operators 
$a\in \mathfrak{gl}(\mathcal{V})$ extended complex linearly 
to $\mathcal{H}$ have the block form
\begin{equation}\label{matrix_notation}
a=\begin{pmatrix} g&h\\ \bar h&\bar g
\end{pmatrix}, 
\end{equation}
where, for $u \in \mathcal{H}$,
\[
\bar{g}(u):=\overline{g(\bar{u})}
 \quad\text{and}\quad \bar{h}(u):=\overline{h(\bar{u}).}
\]
Note that $g \in L(\mathcal{H}_+,\mathcal{H}_+)$, $\bar{g} \in 
L(\mathcal{H}_-, \mathcal{H}_-)$, but $h \in L(\mathcal{H}_-,\mathcal{H}_+)$ 
and $\bar{h} \in L(\mathcal{H}_+,\mathcal{H}_-)$. In particular, for 
$g = \operatorname{id}_{\mathcal{H}_+}$, we have $\bar{g} = 
\operatorname{id}_{\mathcal{H}_-}$.
 
The expression of the complex bilinear form $\Omega$ with respect 
to the  hermitian scalar product 
$\langle\cdot, \cdot\rangle_\mathcal{H}$ reads
\begin{equation}
\label{complex_omega_j_inner_product_relation}
\Omega(u, v) = \langle u, \overline{J v}\rangle_\mathcal{H} 
= \langle J v, \overline{u}\rangle_\mathcal{H}.
\end{equation} 
Note that
$a\in\operatorname{Sp}(\mathcal{V},\Omega)$ if and only if 
$a\in\operatorname{GL}(\mathcal{V})$ and 
\[
\Omega(a u, a v) = \Omega(u, v)
\]
for all $u, v \in \mathcal{H}$. Since for 
$a\in\operatorname{GL}(\mathcal{V})$, one has 
$\overline{a u} = a(\overline{u})$, 
\eqref{complex_omega_j_inner_product_relation} leads to
\[
\Omega(a u, a v) = \langle J av, \overline{a u}\rangle_\mathcal{H} 
= \langle J av, a(\overline{u})\rangle_\mathcal{H} 
= \langle a^* J a v, \overline{u}\rangle_\mathcal{H}
\]
for all $u, v \in \mathcal{H}$. Hence
\begin{equation}\label{Sp_condition}
a \in \operatorname{Sp}(\mathcal{V},\Omega) \Longleftrightarrow 
\begin{cases} 
a \in \operatorname{GL}(\mathcal{V})\subset 
\operatorname{GL}(\mathcal{H})\\ 
a^* J a = J.
\end{cases}
\end{equation}
Using the block decomposition 
\[
J=\begin{pmatrix} i&0\\0& -i
\end{pmatrix}
\]
with respect to the direct sum $\mathcal{H} = \mathcal{H}_+\oplus 
\mathcal{H}_-$, we conclude that  conditions 
\textcolor{red}{ \eqref{Sp_condition} } 
are equivalent to $a = \begin{pmatrix} g&h\\ \bar h&\bar g
\end{pmatrix}$ with
\begin{equation}\label{symplectic_conditions}
g^*g-h^T\bar{h}=\operatorname{id}_{\mathcal{H}_+}  \quad 
\textrm{and} \quad 
g^*h=h^T\bar{g}\in L(\mathcal{H}_-, \mathcal{H}_+),
\end{equation}
where the transpose is defined by
\begin{equation}\label{transpose}
h^T:=(\bar{h})^*  \in L(\mathcal{H}_-, \mathcal{H}_+).
\end{equation}
Thus, for $g = \operatorname{id}_{\mathcal{H}_+}$, we have 
$g^T = (\bar{g})^* = \operatorname{id}_{\mathcal{H}_-}$.
Note that \eqref{Sp_condition} implies
that $a^{-1}=-JaJ$ which in block form has the expression
\begin{equation}\label{rel_symp}
\begin{pmatrix} g&h\\ \bar h&\bar g
\end{pmatrix}^{-1}= \begin{pmatrix} g^*&-h^T\\ -h^*&g^T
\end{pmatrix},
\end{equation}
or, explicitly,
\begin{equation}
gg^*-hh^*=\operatorname{id}_\mathcal{H_+} \text{ and } 
 gh^T=hg^T  \in L(\mathcal{H}_-, \mathcal{H}_+).
\end{equation}
Using \eqref{Sp_condition}, the Lie algebra of the symplectic group satisfies
\begin{equation}\label{sp_condition}
A \in \mathfrak{sp}(\mathcal{V},\Omega) \Longleftrightarrow 
\begin{cases}
A \in \mathfrak{gl}(\mathcal{V})\subset  \mathfrak{gl}(\mathcal{H})\\ 
A^* J + J A = 0.
\end{cases}
\end{equation}
Relative to the decomposition $\mathcal{H}=
\mathcal{H}_+\oplus\mathcal{H}_-$, we have
\[
\mathfrak{sp}(\mathcal{V},\Omega)=\left\{\left.
\begin{pmatrix}
A_1       &A_2\\
\bar{A}_2 &\bar{A}_1
\end{pmatrix}
\in\mathfrak{gl}(\mathcal{H}) \,\right|\,
A_1^*=-A_1,\;A_2=A_2^T\right\}.
\]

\subsection{ The restricted linear group 
${\rm GL}_{\rm res}$}

The \textbf{restricted general linear group}  relative to the 
polarization $\mathcal{H}=\mathcal{H}_+\oplus\mathcal{H}_-$ 
(see \cite{PrSe1990, Wu2001}) is defined by
\[
\operatorname{GL}_{\rm{res}}:=
\left\{a\in\operatorname{GL}(\mathcal{H})\mid 
\left[d,a\right]\in L^2(\mathcal{H})\right\},
\]
where $\operatorname{GL}(\mathcal{H})$ is the 
Banach Lie group of linear complex bounded invertible operators 
on $\mathcal{H}$, $d$ is defined by
$$d:=i(p_+-p_-),$$
$p_\pm$ denotes the projection of $\mathcal{H}$ onto $\mathcal{H}_\pm$ 
parallel to $\mathcal{H}_\mp$, and $L^2(\mathcal{H})$ 
is the space of Hilbert-Schmidt operators on $\mathcal{H}$. Recall
that the Hilbert-Schmidt norm $\| A \|_2$ of a linear operator
$A$ on $ \mathcal{H}$ is defined by
\[
\|A\|_2^2 = \sum_{n=1}^\infty \|Ae_n\|^2,
\]
where $\{e_n\mid n \in \mathbb{N}\}$ is any Hilbert basis of 
$\mathcal{H}$, that $L^2(\mathcal{H}):=
\{A \in \mathfrak{gl}(\mathcal{H}) \mid \|A\|_2<\infty\}$
is a double sided ideal in $\mathfrak{gl}(\mathcal{H})$, 
and that Hilbert-Schmidt operators are compact. 

Note that $g\in\operatorname{GL}(\mathcal{H})$ can be written in the
form
\[
a=
\begin{pmatrix}
a_{++}    &a_{+-}\\
a_{-+} &a_{--}
\end{pmatrix},
\]
where $a_{++}\in L(\mathcal{H}_+,\mathcal{H}_+)$, 
$a_{-+}\in L(\mathcal{H}_+,\mathcal{H}_-)$, 
$a_{+-}\in L(\mathcal{H}_-,\mathcal{H}_+)$, and 
$a_{--}\in L(\mathcal{H}_-,\mathcal{H}_-)$. Using this notation, 
for any $a\in\operatorname{GL}(\mathcal{H})$, we have
\begin{align*}
a\in\operatorname{GL}_{\rm{res}}&\Longleftrightarrow
\left[d,a\right]\in L^2(\mathcal{H})\\
&\Longleftrightarrow a_{+-}\in L^2(\mathcal{H}_-,\mathcal{H}_+)\;\;
\text{and}\;\; a_{-+}\in L^2(\mathcal{H}_+,\mathcal{H}_-).
\end{align*}
Since Hilbert-Schmidt operators are a double sided 
ideal in the Lie algebra of all bounded operators, we have
$ab\in\operatorname{GL}_{\rm{res}}$ for all $a,b\in
\operatorname{GL}_{\rm{res}}$. Moreover, if $a$ is 
invertible and belongs to $\operatorname{GL}_{\rm{res}}$, its inverse 
$b:= a^{-1}$ belongs also to $\operatorname{GL}_{\rm{res}}$. Indeed, 
from $ab = \textrm{id}_{\mathcal{H}}$ it follows that 
\begin{equation}\label{ab}
a_{++} b_{+-} + a_{+-} b_{--} = 0,
\end{equation}
and from $ba = \textrm{id}_{\mathcal{H}}$ we have
\begin{equation}\label{ba}
b_{++} a_{++} = \textrm{id}_{\mathcal{H}_+} -  b_{+-} a_{-+}.
\end{equation}
Multiplying equation \eqref{ab} from the left by $b_{++}$ and using 
\eqref{ba} we get
\[
\left(\textrm{Id}_{\mathcal{H}_+} -  b_{+-} a_{-+}\right) b_{+-} 
+ b_{++} a_{+-} b_{--} = 0,
\]
which can be rewritten as
\[
b_{+-}  = b_{+-} a_{-+} b_{+-} - b_{++} a_{+-} b_{--}.
\]
Since $a_{-+}$ and $a_{+-}$ are Hilbert-Schmidt, it follows from the double
sided ideal property of Hilbert-Schmidt operators that $b_{+-}$ is 
Hilbert-Schmidt. Similarly one can show that $b_{-+}$ is Hilbert-Schmidt. 
Consequently, $\operatorname{GL}_{\rm{res}}$ is a Banach Lie group.

Since $a\in\operatorname{GL}_{\rm res}$ is invertible and 
$a^{-1}\in\operatorname{GL}_{\rm res}$, we conclude from 
equation~\eqref{ba} and its relative that $a_{++}$ and $a_{--}$
are invertible modulo compact operators and are therefore 
Fredholm operators; see \cite[Theorem VII.2]{Pa1963}.
Moreover we have
\[
\operatorname{Ind}a_{++}=-\operatorname{Ind}a_{--},
\]
where $\operatorname{Ind}A$ is the Fredholm index of the 
operator $A$, defined by
\[
\operatorname{Ind}A:=\operatorname{dim}\operatorname{ker}A -
\operatorname{codim}\operatorname{range}A.
\]

It is known (see \cite[\S 6.2]{PrSe1990}) that
$\operatorname{GL}_{\rm{res}}$ is a
Banach Lie group and an open subset of the Banach algebra
\[
\mathfrak{gl}_{\rm{res}}=\left\{A\in\mathfrak{gl}(\mathcal{H})\mid
\left[d,A\right]\in L^2(\mathcal{H})\right\}
\]
endowed with the norm
\[
\|A\|_{\rm res}:=\|A\|+\left\|\left[d,A\right]\right\|_2.
\]
Unlike $\operatorname{GL}(\mathcal{H})$, the restricted general
linear group
$\operatorname{GL}_{\rm{res}}$ has
infinitely many connected components. More precisely, $a$ and $b$
are in the same connected component if and only if
$\operatorname{Ind}a_{++}=\operatorname{Ind}b_{++}$; 
see \cite[\S 6.2 ]{PrSe1990}.

\subsection{The  restricted symplectic linear group 
${\rm Sp}_{\rm res}$}

The \textbf{restricted
symplectic group} is
\begin{align*}
\operatorname{Sp}_{\rm{res}}(\mathcal{V},\Omega)&:=
\operatorname{Sp}(\mathcal{V},\Omega)\cap\operatorname{GL}_{\rm{res}}\\
&\;=\left\{\left.
\begin{pmatrix}
g            &h\\
\bar{h} &\bar{g}\end{pmatrix}
\in\operatorname{Sp}(\mathcal{V},\Omega) \,\right|\, 
h\in L^2(\mathcal{H}_-,\mathcal{H}_+)\right\},
\end{align*}
and its Lie algebra is given by
\[
\mathfrak{sp}_{\rm{res}}(\mathcal{V},\Omega):=
\left\{\left.
\begin{pmatrix}
A_1            &A_2\\
\bar{A}_2 &\bar{A}_1\end{pmatrix}
\in\mathfrak{sp}(\mathcal{V},\Omega)\,\right|\, 
A_2\in L^2(\mathcal{H}_-,\mathcal{H}_+)\right\}.
\]
Using the identities $g^*g=\operatorname{id}_\mathcal{H}+h^T\bar{h}$ 
and $gg^*=\operatorname{id}_\mathcal{H}+hh^*$ we conclude that 
$g^*g$ and $gg^*$ are positive definite and, therefore, injective. 
This proves that $g$ and $g^*$ are injective. Since $g$ is Fredholm, 
its range is closed and equals $(\operatorname{ker}g^*)^\perp=
\mathcal{H}_+$. This proves that $g$ is a bijection and has 
Fredholm index zero. Thus, 
$\operatorname{Sp}_{\rm{res}}(\mathcal{V},\Omega)$ is contained 
in the connected component of the identity of 
$\operatorname{GL}_{\rm{res}}$.

\section{The Siegel disc and restricted Siegel disc}\label{Siegel}

The \textbf{Siegel disc} is defined by
\[
\mathfrak{D}(\mathcal{H}):=\{Z\in L(\mathcal{H}_-,\mathcal{H}_+)
\mid  Z^T = Z\; \text{and}\;
\operatorname{id}_{\mathcal{H}_+}-Z\bar Z\succ 0\},
\]
where, for $A\in L(\mathcal{H}_+,\mathcal{H}_+)$ the notation
$A\succ 0$ means $\langle Au,u\rangle_\mathcal{H}>0$, for all
$u\in\mathcal{H}_+, u\neq 0$ (recall that
$\bar{Z} \in L(\mathcal{H}_+,\mathcal{H}_-)$ and hence
$Z^T=(\bar{Z})^* \in L(\mathcal{H}_-,\mathcal{H}_+)$).
Using the identity
\begin{equation}\label{ZZ}
\left(\operatorname{id}_{\mathcal{H}_+} - Z\bar Z\right)^T 
= \operatorname{id}_{\mathcal{H}_-} - Z^*Z
\end{equation}
we have, equivalently,
\[
\mathfrak{D}(\mathcal{H}):=\{Z\in L(\mathcal{H}_-,\mathcal{H}_+)
\mid  Z^T = Z\; \text{and}\;
\operatorname{id}_{\mathcal{H}_-}-Z^* Z\succ 0\},
\]
The \textbf{restricted Siegel disc} is, by definition,
\[
\mathfrak{D}_{\rm{res}}(\mathcal{H}):=
\{Z\in \mathfrak{D}(\mathcal{H})\mid 
Z\in L^2(\mathcal{H}_-,\mathcal{H}_+)\},
\]
and is an open set in the complex Hilbert space
\begin{equation}\label{def_E_star}
\mathcal{E}_{\rm{res}}(\mathcal{H}):=
\{V\in L^2(\mathcal{H}_-,\mathcal{H}_+)\mid V^T = V\}.
\end{equation}
Since $Z\in\mathfrak{D}_{\rm{res}}(\mathcal{H})$ is a
Hilbert-Schmidt operator, it follows that 
$\operatorname{id}_{\mathcal{H}_+}-Z\bar Z$ is a Fredholm
operator with index zero. Since $\operatorname{id}_{\mathcal{H}_+}-Z\bar Z$ 
is also positive
definite, it is a bijection. We have the following Theorem. 
\begin{thm}
\label{disc restreint de Siegel = espace homogene} 
The map
\begin{equation}\label{Sp_action}
\begin{aligned}
\rho:& \operatorname{Sp}_{\rm res}(\mathcal{V},\Omega)
\times\mathfrak{D}_{\rm res}(\mathcal{H})\longrightarrow
\mathfrak{D}_{\rm res}(\mathcal{H}),\\ 
 &\left(
\begin{pmatrix}
g            &h\\
\bar{h} &\bar{g}
\end{pmatrix}
,Z\right)\longmapsto (gZ+h)(\bar{h}Z+\bar{g})^{-1}
\end{aligned}
\end{equation}
is a well-defined transitive action of the restricted symplectic 
group onto the restricted Siegel disc. The isotropy group of $0$ 
is isomorphic to the unitary group
\[
\operatorname{Sp}_{\rm
res}(\mathcal{V},\Omega)_0\simeq\operatorname{U}(\mathcal{H}_+).
\]
Therefore, the orbit map induces a diffeomorphism of real Banach manifolds
\begin{equation}\label{diffeo1}
\operatorname{U}(\mathcal{H}_+)\backslash
\operatorname{Sp}_{\rm res}(\mathcal{V},\Omega)
\longrightarrow \mathfrak{D}_{\rm res}(\mathcal{H}),\quad 
\left[
\begin{pmatrix}
g       &h\\
\bar{h} &\bar{g}\end{pmatrix}
\right]_{\operatorname{U}(\mathcal{H}_+)}\longmapsto h\bar{g}^{-1}.
\end{equation}
\end{thm}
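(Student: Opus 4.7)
The plan is to verify the Theorem in three stages: well-definedness together with the action axiom, transitivity together with the isotropy computation, and promotion to a diffeomorphism.

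For \emph{well-definedness}, given $a = \begin{pmatrix} g & h \\ \bar h & \bar g\end{pmatrix} \in \operatorname{Sp}_{\rm res}(\mathcal{V},\Omega)$ and $Z \in \mathfrak{D}_{\rm res}(\mathcal{H})$, I would set $U := gZ+h$, $V := \bar h Z + \bar g$ and first exploit the ``homogeneous coordinates'' $\xi_Z := \begin{pmatrix} Z \\ \operatorname{id}_{\mathcal{H}_-}\end{pmatrix}$. Since $a\xi_Z = \begin{pmatrix} U \\ V\end{pmatrix}$, evaluating $\xi_Z^*(a^*Ja)\xi_Z = \xi_Z^* J \xi_Z$ with $J=\operatorname{diag}(i,-i)$ yields the key identity
\[
V^*V - U^*U = \operatorname{id}_{\mathcal{H}_-} - Z^*Z.
\]
Since $\operatorname{id}-Z^*Z\succ 0$ is invertible (positive and a compact perturbation of the identity), $V$ is bounded below; combined with $V$ being a compact perturbation of the Fredholm-index-zero operator $\bar g$, this forces $V$ invertible. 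Sandwiching the identity by $V^{-*}$ on the left and $V^{-1}$ on the right gives $\operatorname{id}-W^*W\succ 0$ for $W:=UV^{-1}$, and Hilbert--Schmidtness of $W$ is immediate from $U\in L^2$ (as $Z,h\in L^2$ and $L^2$ is a two-sided ideal). The symmetry $W^T = W$ requires expanding both sides by cross-multiplication and invoking the four block identities
\[
g^*g = \operatorname{id}_{\mathcal{H}_+} + h^T\bar h,\quad \bar g^*\bar g = \operatorname{id}_{\mathcal{H}_-} + h^*h,\quad g^*h = h^T\bar g,\quad \bar g^*\bar h = h^*g
\]
that come from $a^*Ja = J$, together with $Z^T = Z$; this is the most delicate algebraic step. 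The action axiom then reduces to associativity of matrix multiplication on $\xi_Z$ under the equivalence $\xi_Z \sim \xi_Z\phi$ for any invertible $\phi\in\operatorname{GL}(\mathcal{H}_-)$.

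For \emph{transitivity and isotropy}, I would exhibit the explicit section
\[
a_Z := \begin{pmatrix}
(\operatorname{id} - Z\bar Z)^{-1/2} & Z(\operatorname{id} - \bar Z Z)^{-1/2}\\
\bar Z(\operatorname{id} - Z\bar Z)^{-1/2} & (\operatorname{id} - \bar Z Z)^{-1/2}
\end{pmatrix},
\]
with the positive square roots defined by functional calculus on the positive invertible operators $\operatorname{id} - Z\bar Z$ and $\operatorname{id} - \bar Z Z$; direct verification shows $a_Z\in\operatorname{Sp}_{\rm res}(\mathcal{V},\Omega)$ (its off-diagonal block is Hilbert--Schmidt since $Z$ is, and the four block identities above are satisfied) and $\rho(a_Z,0) = h_Z\bar g_Z^{-1} = Z$. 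For the isotropy at $0$, the equation $h\bar g^{-1} = 0$ together with invertibility of $\bar g$ forces $h=0$; then $g^*g = \operatorname{id}_{\mathcal{H}_+} + h^T\bar h$ reduces to $g^*g = \operatorname{id}_{\mathcal{H}_+}$, so $g\in\operatorname{U}(\mathcal{H}_+)$. Conversely every diagonal $\operatorname{diag}(g,\bar g)$ with $g$ unitary trivially fixes $0$ and lies in $\operatorname{Sp}_{\rm res}$.

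For the \emph{diffeomorphism}, smoothness of $\rho$ is clear (composition of bounded-operator multiplications and one inversion in $\operatorname{GL}(\mathcal{H}_-)$), and the map $Z \mapsto [a_Z]$ provides a smooth inverse of the orbit map because $Z \mapsto \operatorname{id} - Z\bar Z$ and the functional calculus $A \mapsto A^{-1/2}$ on the open cone of positive invertible operators are smooth. The main obstacle will be the algebraic verification of $W^T = W$ in Stage 1: it is where the four symplectic block identities, the relation $Z^T = Z$, and the bookkeeping of the three operations $\bar{(\cdot)}$, $(\cdot)^*$, and $(\cdot)^T = (\bar{(\cdot)})^*$ must all be tracked simultaneously without any type or sign mismatch.
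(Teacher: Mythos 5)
Your proposal is correct, and for the first stage (well-definedness, isotropy) it follows essentially the paper's route: the identity $V^*V-U^*U=\operatorname{id}_{\mathcal{H}_-}-Z^*Z$ is exactly the paper's equation \eqref{positive_definite}, though your derivation via $\xi_Z^*(a^*Ja)\xi_Z$ is slicker than the paper's block-by-block expansion, and the invertibility argument (injective plus compact perturbation of a Fredholm-index-zero operator) is identical. Where you genuinely diverge is in transitivity and in the final diffeomorphism. The paper reaches the point $Z$ by exponentiating the Lie algebra element $\begin{pmatrix}0&B_Z\\ \overline{B_Z}&0\end{pmatrix}$ with $B_Z=Z\,\operatorname{argtanh}|Z|/|Z|$, citing Upmeier and Neeb for the formula $\rho_{g_Z}(0)=Z$; your explicit section $a_Z$ built from $(\operatorname{id}-Z\bar Z)^{-1/2}$ is the same group element written in closed form ($\cosh\circ\operatorname{argtanh}=(1-t^2)^{-1/2}$), and your verification is more self-contained since it avoids the external references, at the cost of checking the intertwining $Z f(\bar ZZ)=f(Z\bar Z)Z$ and the symplectic block identities by hand. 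For the diffeomorphism, the paper first equips $\operatorname{U}(\mathcal{H}_+)\backslash\operatorname{Sp}_{\rm res}(\mathcal{V},\Omega)$ with its quotient Banach manifold structure via Bourbaki (using that $\mathfrak{u}(\mathcal{H}_+)$ admits the topological complement $\mathfrak{m}$) and then shows the tangent map \eqref{tgt_map_D_res} at the identity coset is an isomorphism; you instead exhibit $Z\mapsto[a_Z]$ as a global smooth inverse. Your route buys a global trivialization of the bundle $\operatorname{Sp}_{\rm res}(\mathcal{V},\Omega)\to\mathfrak{D}_{\rm res}(\mathcal{H})$ and avoids the inverse-function-theorem step, but you should still say a word about why the quotient carries a Banach manifold structure making the projection a submersion (the Bourbaki step, or equivalently the splitting of the isotropy Lie algebra), since the statement of the theorem is about that quotient manifold; this is a presentational omission rather than a gap. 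Finally, note that both you and the paper leave the verification $W^T=W$ as a ``direct computation,'' so you are at the same level of rigor there.
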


\begin{proof} In order to show that  in \eqref{Sp_action} 
the operator $(\bar{h}Z+\bar{g})$ is invertible,  we note that, by 
a direct computation,
\begin{align}\label{positive_definite}
&(\bar{h}Z+\bar{g})^*(\bar{h}Z+\bar{g})-(gZ+h)^*(gZ+h) \nonumber \\
&\quad =- Z^*(g^*g - h^T\bar{h})Z + Z^*(h^T\bar{g} - g^*h)
+(g^T\bar{h} - h^*g)Z + g^T\bar{g} - h^*h \nonumber \\
&\quad \stackrel{\eqref{symplectic_conditions}}=
\operatorname{id}_{\mathcal{H}_-}-Z^*Z\succ 0.
\end{align}
It follows that 
\begin{align}\label{positive_definite2}
&(\bar{h}Z+\bar{g})^*(\bar{h}Z+\bar{g}) = 
\operatorname{id}_{\mathcal{H}_-}-Z^*Z + (gZ+h)^*(gZ+h)
\end{align}
is strictly positive, as a sum of a strictly positive operator 
and a positive operator, and is hence injective.
Therefore, we conclude that  $(\bar{h}Z+\bar{g})$ is also injective. 
Since $\bar{g}$ is a Fredholm operator with Fredholm index zero 
and $\bar{h}Z$ is compact, $(\bar{h}Z+\bar{g})$ is also a Fredholm 
operator with Fredholm index zero. It follows  that it is 
a Hilbert space isomorphism of $\mathcal{H}_-$. Since 
the Hilbert-Schmidt operators are a two-sided ideal, we have
\[
(gZ+h)(\bar{h}Z+\bar{g})^{-1}\in L^2(\mathcal{H}_-,\mathcal{H}_+).
\]
Using  the implication 
$A\succ 0\Rightarrow B^*AB\succ 0$ for $B$ injective, 
we obtain, with $A$ the left hand side of \eqref{positive_definite}  
and $B = (\bar{h}Z+\bar{g})^{-1}$, 
\[
\operatorname{id}_{\mathcal{H}_-}-\left[(gZ+h)(\bar{h}Z+\bar{g})^{-1}\right]^*(gZ+h)(\bar{h}Z+\bar{g})^{-1}\succ 0.
\]
Moreover, a direct computation shows that 
$$[(gZ+h)(\bar{h}Z+\bar{g})^{-1}]^T
= (gZ+h)(\bar{h}Z+\bar{g})^{-1}.$$
This proves that $(gZ+h)(\bar{h}Z+\bar{g})^{-1}\in
\mathfrak{D}_{\rm res}(\mathcal{H})$.

The isotropy group of $Z=0$ is easily computed to be 
\[
\operatorname{Sp}_{\rm res}(\mathcal{V},\Omega)_0=\left\{\left.
\begin{pmatrix}
g &0\\
0 &\bar{g}\end{pmatrix}
\right| g\in\operatorname{U}(\mathcal{H}_+)\right\}
\simeq \operatorname{U}(\mathcal{H}_+),
\]
which is a Banach Lie subgroup of  
$\operatorname{Sp}_{\rm res}(\mathcal{V},\Omega)$ since 
its Lie algebra
\[
\mathfrak{sp}_{\rm res}(\mathcal{V},\Omega)_0=\left\{\left.
\begin{pmatrix}
A &0\\
0 &\bar{A}\end{pmatrix}
\right| A\in\mathfrak{u}(\mathcal{H}_+)\right\}
\simeq \mathfrak{u}(\mathcal{H}_+)
\]
admits a topological complement $\mathfrak{m}$ in 
$\mathfrak{sp}_{\rm res}(\mathcal{V},\Omega)$ given by
\[
\mathfrak{m}=\left\{\left.
\begin{pmatrix}
0       &A\\
\bar{A} &0\end{pmatrix}
\right| A\in L^2(\mathcal{H}_-,\mathcal{H}_+)\right\}.
\]
By \cite[Proposition~11, \S 1.6, Chap.III]{Bo1972}, the quotient
space $\operatorname{U}(\mathcal{H}_+)\backslash
\operatorname{Sp}_{\rm res}(\mathcal{V},\Omega)$ is naturally 
endowed with a smooth Banach manifold structure.

For $Z\in L^2(\mathcal{H}_-,\mathcal{H}_+)$ such that 
$\operatorname{id}_{\mathcal{H}_-}-Z^*Z\succ 0$, define
\begin{equation}\label{action_transitive}
B_Z:=Z\frac{\operatorname{argtanh}|Z|}{|Z|}=
Z\sum_{k=0}^\infty\frac{1}{2k+1}(Z^*Z)^k\in 
L^2(\mathcal{H}_-,\mathcal{H}_+).
\end{equation}
When $Z^T=Z$ we have $B_Z^T=B_Z$. This proves that
\[
\begin{pmatrix}
0 &B_Z\\
\overline{B_Z} &0\end{pmatrix}
\in\mathfrak{sp}_{\rm res}(\mathcal{V},\Omega)\quad
\text{and}\quad g_Z:=\operatorname{exp}
\begin{pmatrix}
0 &B_Z\\
\overline{B_Z} &0\end{pmatrix}
\in\operatorname{Sp}_{\rm res}(\mathcal{V},\Omega).
\]
Since
\[
\rho _{g_{Z}}(0)=Z,
\]
where $\rho $ is the action \eqref{Sp_action} (see \cite{Up85} 
and \cite[Remark 6.5]{Ne2002}), we conclude that 
$\operatorname{Sp}_{\rm res}(\mathcal{V},\Omega)$ acts 
transitively on $\mathfrak{D}_{\rm res}(\mathcal{H})$. 
Therefore, the orbit map
\[
\begin{pmatrix}
g            &h\\
\bar{h} &\bar{g}\end{pmatrix}
\in  \operatorname{Sp}_{\rm res}( \mathcal{V} , \Omega ) 
\mapsto h\bar g ^{-1} \in  \mathfrak{D}   _{\rm res}( \mathcal{H} ),
\]
at $Z=0$, induces a smooth bijection
\[
\operatorname{U}(\mathcal{H}_+)\backslash
\operatorname{Sp}_{\rm res}(\mathcal{V},\Omega)
\longrightarrow \mathfrak{D}_{\rm res}(\mathcal{H}),
\]
which is also a diffeomorphism since the tangent map at 
$[\operatorname{id}_{\mathcal{H}}]$ is the isomorphism given by
\[
\mathfrak{u}(\mathcal{H}_+)\backslash
\mathfrak{sp}_{\rm{res}}(\mathcal{V},\Omega)
\longrightarrow \mathcal{E}_{\rm{res}}(\mathcal{H}),
\]
\begin{equation}\label{tgt_map_D_res}
\left[
\begin{pmatrix}
A_1       &A_2\\
\bar{A}_2 &\bar{A}_1\end{pmatrix}
\right]_{\mathfrak{u}(\mathcal{H}_+)}
=\left[
\begin{pmatrix}
0       &A_2\\
\bar{A}_2 &0\end{pmatrix}
\right]_{\mathfrak{u}(\mathcal{H}_+)}\longmapsto A_2,
\end{equation}
where the bracket denotes the equivalence class of the matrix modulo an element
\[
\begin{pmatrix}
A &0\\
0 &\bar{A}\end{pmatrix}, \quad A\in\mathfrak{u}(\mathcal{H}_+).
\tag*{\qedsymbol}
\]
\renewcommand{\qedsymbol}{} 
\end{proof}

\begin{rem}\label{action_de_Sp_2}{
The proof of Theorem~\ref{disc restreint de Siegel = espace
homogene} shows that  $\mathfrak{D}_{\rm res}(\mathcal{H})$ 
is, in fact, a homogeneous space under the Hilbert Lie group
$$\operatorname{Sp}_{2}(\mathcal{V}, \Omega):= 
\{\operatorname{id}_\mathcal{H} + L^2(\mathcal{H})\}\cap 
\operatorname{Sp}(\mathcal{V},\Omega).$$
Indeed, one sees immediately that $g_Z$
belongs to $\operatorname{Sp}_{2}(\mathcal{V}, \Omega)$ whenever
$Z\in\mathfrak{D}_{\rm res}(\mathcal{H})$. 
\hfill $\lozenge$}
\end{rem}

\begin{rem}\label{Sp_mod_U}{
Note also that $\mathfrak{D}_{\rm res}(\mathcal{H})$, as an open 
subset of $\mathcal{E}_{\rm res}(\mathcal{H})$,  is a complex 
Hilbert manifold on which 
$\operatorname{Sp}_{\rm res}(\mathcal{V},\Omega)$ acts by 
biholomorphic maps. Therefore, the orbit map \eqref{diffeo1} 
induces a $\operatorname{Sp}_{\rm res}(\mathcal{V},\Omega)$-invariant 
complex Hilbert manifold structure on the homogeneous space  
$\operatorname{U}(\mathcal{H}_+)\backslash
\operatorname{Sp}_{\rm res}(\mathcal{V},\Omega)$.
\hfill $\lozenge$}
\end{rem}

Note that on $\mathcal{E}_{\rm{res}}(\mathcal{H})$, there is a 
well defined Hermitian inner product given by
\[
h_{\mathfrak{D}}(0)(U, V) := \operatorname{Tr}(V^*U)=
\operatorname{Tr}(\bar{V}U),
\]
with $U^T=U$, $V^T=V \in\mathcal{E}_{\rm{res}}(\mathcal{H})$.
Since $h_{\mathfrak{D}}(0)$ is invariant under the isotropy subgroup 
$\operatorname{Sp}_{\rm{res}}(\mathcal{V},\Omega)_0$ at $Z=0$, 
it extends to a 
$\operatorname{Sp}_{\rm{res}}(\mathcal{V},\Omega)$-invariant 
Hermitian metric $h_{\mathfrak{D}}$ on 
$\mathfrak{D}_{\rm{res}}(\mathcal{H})$. 
A tangent vector $\tilde{U}$ at 
$Z=h\bar{g}^{-1}\in\mathfrak{D}_{\rm{res}}(\mathcal{H})$ is 
the image of a tangent vector $U$ at 
$0\in \mathfrak{D}_{\rm res}(\mathcal{H})$ by the differential 
of the action of 
$\begin{pmatrix} g&h\\ \bar h&\bar g\end{pmatrix}
\in \operatorname{Sp}_{\rm{res}}(\mathcal{V},\Omega)$.  A 
direct computation shows that
\begin{align*}
\tilde{U} &=  g U \bar{g}^{-1} - h \bar{g}^{-1} \bar{h} U \bar{g}^{-1}\\
& = \left(\operatorname{id}_{\mathcal{H}_+} -Z\bar{Z}\right)g U \bar{g}^{-1} 
= (g^*)^{-1}U \bar{g}^{-1};
\end{align*}
hence $U = g^* \tilde{U} \bar{g}$, where we have used 
\begin{align*}
\operatorname{id}_{\mathcal{H}_+} -Z\bar{Z}
& = \operatorname{id}_{\mathcal{H}_+} -Z^T\bar{Z} \\ 
&=  \operatorname{id}_{\mathcal{H}_+} - (g^*)^{-1} h^T h \bar{g}^{-1} \\ 
& =  \operatorname{id}_{\mathcal{H}_+} - 
(g^*)^{-1} (g^*g - \operatorname{id}_{\mathcal{H}_+}) \bar{g}^{-1}  
= (g^*)^{-1} g^{-1}.
\end{align*}
It follows that the Hermitian inner product 
$h_{\mathfrak{D}}$ is given at 
$Z=h\bar{g}^{-1}\in\mathfrak{D}_{\rm{res}}(\mathcal{H})$ by
\begin{equation}\label{Poincare_metric}
h_{\mathfrak{D}}(Z)(\tilde{U},\tilde{V})=
\operatorname{Tr}\left(g^T\tilde{V}^* g g^* \tilde{U} \bar{g}\right)   
=  \operatorname{Tr}\left(\overline{g\bar{g}^T\tilde{V}}
g\bar{g}^T\tilde{U}\right),
\end{equation}
where $g\bar{g}^T$ depends only on $Z$ since for 
$Z=h\bar{g}^{-1}$ we have
\[
(\operatorname{id}_{\mathcal{H}_-}-Z^*Z)^{-1}=\bar{g}g^T.
\]
This Hermitian metric makes $\mathfrak{D}_{\rm{res}}(\mathcal{H})$ 
into a K\"ahler Hilbert manifold with the 
$\operatorname{Sp}_{\rm{res}}(\mathcal{V},\Omega)$-invariant 
K\"ahler two-form $\omega_\mathfrak{D}$ given at the origin by
\[
\omega_\mathfrak{D}(0)(U,V)=
\operatorname{Im}(\operatorname{Tr}(\bar{V}U))=
\frac{i}{2}\operatorname{Tr}(\bar{U}V-\bar{V}U).
\]
Using the diffeomorphism \eqref{diffeo1}, we obtain a
$\operatorname{Sp}_{\rm{res}}(\mathcal{V},\Omega)$-invariant
Hermitian metric on
$\operatorname{U}(\mathcal{H}_+)\backslash
\operatorname{Sp}_{\rm{res}}(\mathcal{V},\Omega)$
whose value at $[\operatorname{id}_\mathcal{H}]$ is
\[
h_\mathfrak{D}([\operatorname{id}_\mathcal{H}])\left(\left[
\begin{pmatrix}
A_1       &A_2\\
\bar{A}_2 &\bar{A}_1\end{pmatrix} 
\right]_{\mathfrak{u}(\mathcal{H}_+)},\left[
\begin{pmatrix}
B_1       &B_2\\
\bar{B}_2 &\bar{B}_1\end{pmatrix} 
\right]_{\mathfrak{u}(\mathcal{H}_+)}\right)
=\operatorname{Tr}(\bar{B}_2A_2).
\]
The expression of the K\"ahler two-form is
\begin{multline}\label{omega_D}
\omega_\mathfrak{D}([\operatorname{id}_\mathcal{H}])\left(\left[
\begin{pmatrix}
A_1       &A_2\\
\bar{A}_2 &\bar{A}_1\end{pmatrix} 
\right]_{\mathfrak{u}(\mathcal{H}_+)},\left[
\begin{pmatrix}
B_1       &B_2\\
\bar{B}_2 &\bar{B}_1\end{pmatrix}
\right]_{\mathfrak{u}(\mathcal{H}_+)}\right) \\
 \qquad = \frac{i}{2}\operatorname{Tr}(\bar{A}_2B_2-\bar{B}_2A_2).
\end{multline}

\section{Universal central extensions of some Banach Lie groups} 
\label{central}

In this section we recall the construction of the central 
extension of $ \operatorname{Sp}_{\rm res}( \mathcal{V} , \Omega )$ 
by $\mathbb{S}^1$. A first step is to construct the central extension 
of the connected component $\operatorname{GL}_{\rm res}^0 $ of the 
identity of $\operatorname{GL}_{\rm res}$ by $\mathbb{C}^\times$ 
(see \cite{PrSe1990,  Wu2001, GO10}).

Let us introduce some notation.
Recall that for  a bounded linear operator $A$, the square root 
$(A^*A)^{\frac{1}{2}}$ of $A^*A$ is well defined (see 
\cite[Theorem VI.9]{RS1}). The Schatten class $L^{1}(\mathcal{H}_+)$ 
of trace class operators on $\mathcal{H}_+$ is the subspace of 
$L(\mathcal{H}_+)$  consisting of those 
bounded operators $A$ on $\mathcal{H}_+$ for which 
$\|A\|_1 := \Tr \big( (A^*A)^{\frac{1}{2}} \big) <\infty$.
Denote by $\operatorname{GL}_1(\mathcal{H}_+)$ the Banach 
Lie group of invertible operators which differ from the identity by  
trace class operators: 
\[
\operatorname{GL}_1(\mathcal{H}_+):=
\{a\in \operatorname{GL}(\mathcal{H}_+)\mid 
a-\operatorname{id}_{\mathcal{H}_+}\in L^1( \mathcal{H} _+)\},
\] 
where the topology is induced by 
$\operatorname{id}_{\mathcal{H}_+}+L^1( \mathcal{H} _+)$.
Recall from \cite[Chapter 3]{Si1979} that operators in 
$\operatorname{GL}_1(\mathcal{H}_+)$ have well-defined determinants.

\subsection{The central extension of $\operatorname{GL}_{\rm res}^0 $}
Denote by $\operatorname{GL}_{\rm res}^0 $ the connected component 
of $\operatorname{GL}_{\rm res}$ containing the identity.
Given an operator $ r \in \operatorname{GL}_{\rm res}$ with 
$ \operatorname{Ind}(r _{++})=\pm 1$, we have a well-defined 
action of $ \mathbb{Z}  $ on $\operatorname{GL}_{\rm res}^0$ 
by group homomorphisms given by $(n, a) \mapsto r ^n  \,a\, r ^{-n}$. 
We can form the semidirect product $ \mathbb{Z}  \,\ltimes\, 
\operatorname{GL}_{\rm res}^0$ with multiplication law 
$$(n,a)(m,b)=(n+m, a\, r ^n b \,r ^{-n})$$ 
and we obtain the Banach Lie group isomorphism
\[
\mathbb{Z}  \,\ltimes\, \operatorname{GL}_{\rm res}^0 
\rightarrow\operatorname{GL}_{\rm res},\quad 
( n, a) \mapsto a \,r^n.
\]

Let us recall from \cite[\S 6.6]{PrSe1990}, \cite[\S II.3]{Wu2001},
or \cite[section~2.2]{GO10} the definition of the central extension  
$\widetilde{\operatorname{GL}^0_{\rm res}}$. 
Let
\[
\mathcal{F} : = \left\{(a, b)\in 
\operatorname{GL}_{\rm res}^0\times
\operatorname{GL}(\mathcal{H}_+)\mid a_{++}- b\in
L^1(\mathcal{H}_{+})\right\},
\]
where, with respect to the decomposition $\mathcal{H} = 
\mathcal{H}_+ \oplus \mathcal{H}_-$,
\[
 a=
\begin{pmatrix}
a_{++}    &a_{+-}\\
a_{-+} &a_{--}
\end{pmatrix}.
\]
According to \cite[\S 6.6]{PrSe1990} and 
\cite[Lemma II.18 and Corollary II.19]{Wu2001}, 
 $ \mathcal{F} $ has the following properties. 
\begin{itemize}
\item $ \mathcal{F} $ is a subgroup of $\operatorname{GL}_{\rm res}^0 
\times \operatorname{GL}( \mathcal{H} _+)$.
\item $ \mathcal{F} $ is a Banach Lie group when endowed with 
the topology induced by the injective map 
\begin{equation}
\begin{array}{llll}
\iota: & \mathcal{F} & \longrightarrow & \operatorname{GL}_{\rm res}^0 \times L^1(\mathcal{H}_{+})\\
& (a, b) & \longmapsto & (a, a_{++}-b)
\end{array}
\end{equation}
and the range of this injective map is open.
\item We have the exact sequence of Banach Lie groups
\[
\{1\} \longrightarrow \operatorname{GL}_1(\mathcal{H}_+) 
\overset{\alpha}{\longrightarrow} \mathcal{F} 
\overset{\beta}{\longrightarrow} 
\operatorname{GL}_{\rm res}^0\longrightarrow \{1\},
\]
where $\alpha~:\operatorname{GL}_1(\mathcal{H}_+) \rightarrow 
\mathcal{F}$ is the injection $h \mapsto 
(\operatorname{id}_\mathcal{H}, h)$ and $\beta~:\mathcal{F} 
\rightarrow  \operatorname{GL}_{\rm res}^0$ is the 
projection $(a ,b) \mapsto a$. Therefore 
$\operatorname{GL}_{\rm res}^0 = 
\mathcal{F}/\alpha(\operatorname{GL}_1(\mathcal{H}_+))$.
\item $ \mathcal{F} $ is contractible.
\end{itemize}
Define the Banach Lie group 
\begin{equation}\label{def_tilde_GL}
\widetilde{\operatorname{GL}^0_{\rm res}} := 
\mathcal{F}/\alpha\left(\operatorname{SL}_1(\mathcal{H}_+)\right),
\end{equation}
where $\operatorname{SL}_1(\mathcal{H}_+)$ is the normal
Banach Lie  subgroup of determinant-$1$ operators in 
$\operatorname{GL}_1(\mathcal{H}_+)$. We denote by 
$[a,h]_ \alpha $ the elements in 
$\widetilde{\operatorname{GL}^0_{\rm res}}$. Since 
\[
\{1\} \longrightarrow \mathbb{C}^ \times 
\overset{[\alpha]}{\longrightarrow}  
\widetilde{\operatorname{GL}^0_{\rm res}}
\overset{[ \beta ]}{\longrightarrow}  
\operatorname{GL}_{\rm res}^0\longrightarrow \{1\},
\]
where $[\alpha ](z)=[\operatorname{id}_\mathcal{H}, z\,
\operatorname{id}_{\mathcal{H}_+}]_{ \alpha }$ and 
$ [ \beta ]\left(  [a, b]_ \alpha \right) =a$, is an exact 
sequence of Banach Lie groups and 
$[\alpha] \left(\mathbb{C}^\times \right)$ is in the center, 
it follows that $\widetilde{\operatorname{GL}^0_{\rm res}}$ is 
a central extension of $\operatorname{GL}^0_{\rm res}$ by 
$\mathbb{C}^ \times $.

\subsection{The central extension of $\operatorname{GL}_{\rm res}$}
\label{section_central_gl}

We now recall the construction of the central extension 
$ \widetilde{ \operatorname{GL}_{\rm res}}$ of  the whole group 
$\operatorname{GL}_{\rm res}$. Let $r$ be the shift operator
defined by $r(e_j) := e_{j+1}$, 
where $\{e_j\mid j\geq 0\}$ is a Hilbert basis of $\mathcal{H}_+$ 
and $\{e_j\mid j<0\}$ is a Hilbert basis of $\mathcal{H}_-$.
Consider the map $\Phi : \mathcal{F} \rightarrow \mathcal{F}$ 
defined by 
\[
\Phi (a,b):= (rar^{-1} , b _r ),
\] 
where  $ b _r $ is defined by 
\[
b_r:=
\begin{cases}
r_{|\mathcal{H}_+} \circ b \circ \left(r_{|\mathcal{H}_+}\right)^{-1}
&\text{on} \quad r(\mathcal{H}_+)\\ 
\operatorname{id}  &\text{on} \quad 
\left(r_{|\mathcal{H}_+}(\mathcal{H}_+)\right)^{\perp}=\mathbb{C}e_0.
\end{cases}
\]
Note that this defines a smooth map $b \in \operatorname{GL}(\mathcal{H}_+)
\mapsto b_r \in \operatorname{GL}( \mathcal{H}_+)$ and that 
 \[
 (ra r ^{-1} )_{++}-b _r \in L ^1 ( \mathcal{H}_+)
 \] 
which implies that $ \Phi $ maps $\mathcal{F}$ to $\mathcal{F}$. 
Let us show that $\Phi$  is smooth. Consider the following diagram
\[
\begin{xy}
\xymatrix{
(a,b) \in \mathcal{F}\ar[d]^{ \Phi }\ar[rr]^{\hspace{-1.5cm}\iota}& 
&(a, a_{++} - b) \in \operatorname{GL}_{\rm res}^0 \times 
L^1(\mathcal{H}_{+})\ar[d]^{ \Psi }& &\\
(r ar^{-1}, b_r)  \in \mathcal{F}\ar[rr]^{\hspace{-1.5cm}\iota}& 
&  (r a r^{-1}, (r a r^{-1})_{++} - b_r) \in    
\operatorname{GL}_{\rm res}^0 \times L^1(\mathcal{H}_{+})& &\\
}
\end{xy}
\]
We see that
\[
\iota\circ \Phi = \Psi \circ \iota
\]
with 
\begin{equation}
\begin{array}{llll}
\Psi: & \operatorname{GL}_{\rm res}^0 \times L^1(\mathcal{H}_{+})& 
\longrightarrow & 
\operatorname{GL}_{\rm res}^0 \times L^1(\mathcal{H}_{+})\\
& (a, c) & \longmapsto & 
( r a r^{-1}, (r a r^{-1})_{++}- (a_{++} - c)_r).
\end{array}
\end{equation}
Since $\Psi$ is smooth and since the topology of $\mathcal{F}$ is 
given by the induced topology from $\operatorname{GL}_{\rm res}^0 
\times L^1(\mathcal{H}_{+})$ via the injection $\iota$, it follows 
that $\Phi$ is smooth. It is an injective group 
homomorphism which is not surjective. Given an element $[a',b']_\alpha 
\in \widetilde{\operatorname{GL}^0_{\rm res}}$, we can always find 
$ c \in \alpha ( \operatorname{SL}_1( \mathcal{H} _+))$ such that 
$(a',b')c$ is in the image of the map $\Phi$. This shows that 
$\Phi $ induces a well-defined Banach Lie group isomorphism 
$\bar{\Phi}$ of $\widetilde{\operatorname{GL}^0_{\rm res}}$.

Consider the action of $\mathbb{Z}$ on 
$\widetilde{\operatorname{GL}^0_{\rm res}}$ by group 
homomorphisms given by
\[
\left( n, [a,b]_ \alpha \right)\mapsto n \!\cdot\! [a,b]_\alpha := 
\bar{\Phi}^n ([a,b]_\alpha), \quad n \in \mathbb{Z}.
\]
We now form the associated semidirect product $ \mathbb{Z}  \,\ltimes\, 
\widetilde{\operatorname{GL}^0_{\rm res}}$ with multiplication given by
\[
\left( n,[a,b]_\alpha \right) \left( m,[k,l]_\alpha \right) 
=\left( n+m, [a,b]_\alpha (n \!\cdot\! [k,l] _\alpha )\right).
\]
A central extension of $\operatorname{GL}_{\rm res}$ by 
$\mathbb{C}^\times$ is defined by 
\begin{equation}\label{def_GL_ext}
\widetilde{\operatorname{GL}_{\rm res}}:=\mathbb{Z}  \,\ltimes\, 
\widetilde{\operatorname{GL}_{\rm res}^0}.
\end{equation}

As mentioned in the introduction, there is no global smooth section 
of the principal bundle $\widetilde{\operatorname{GL}_{\rm res}}
\to\operatorname{GL}_{\rm{res}}$ (see \cite[page 88]{PrSe1990}). 
Therefore, as a smooth manifold, 
$\widetilde{\operatorname{GL}_{\rm{res}}}$ is not diffeomorphic 
to $\operatorname{GL}_{\rm{res}}\times\mathbb{C}^\times$ and the 
group multiplication is not described by a global smooth cocycle.

The Lie algebra $\widetilde{\mathfrak{gl}_{\rm{res}}}$ of  
$\widetilde{\operatorname{GL}^0_{\rm res}}$ consists of classes 
$[B,q] _\alpha $ in the quotient Banach Lie algebra of 
$\mathcal{F}/\alpha(\operatorname{SL}_1(\mathcal{H}_+))$, 
where $B\in \mathfrak{gl}_{\rm res}$ and 
$q\in \mathfrak{gl}( \mathcal{H} _+)$. Note that 
$\widetilde{\mathfrak{gl}_{\rm{res}}}$ can be identified 
with $\mathfrak{gl}_{\rm res} \oplus \mathbb{C}$ via the map
\[
[B, q]_\alpha  \mapsto \left(B, -2i \operatorname{Tr}(B_{++} - q)\right).
\]
The Lie bracket of 
$\widetilde{\mathfrak{gl}_{\rm{res}}}=
\mathfrak{gl}_{\rm{res}}\oplus\mathbb{C}$
 is given by
\[
[(A,\lambda ),(B,\nu )]=([A,B],s(A,B)),
\]
where $s:\mathfrak{gl}_{\rm{res}}\times\mathfrak{gl}_{\rm{res}}
\to\mathbb{C}$ is the Schwinger term, i.e.,  the continuous 
two-cocycle given by 
\[
s(A,B):=\operatorname{Tr}(A[d,B])= 
2i \operatorname{Tr} \left( A_{-+}B_{+-} - A_{+-}B_{-+}\right) ,
\]
where $d:=i(p_+-p_-)$.
Consider the following Banach Lie algebra
\[
(\mathfrak{gl}_{\rm{res}})_*:=\{\mu\in\mathfrak{gl}_{\rm{res}}
\mid p_\pm\mu|_{\mathcal{H}_\pm}\in L^1(\mathcal{H}_\pm)\},
\]
where $L^1(\mathcal{H}_\pm)$ denotes the Banach space of trace class 
operators on $\mathcal{H}_{\pm}$. 

Consider the restricted trace 
$\operatorname{Tr}_{\rm{res}}: (\mathfrak{gl}_{\rm{res}})_* 
\rightarrow \mathbb{C}$ defined in \cite[formula (2.32)]{GO10} by
\[
\operatorname{Tr}_{\rm{res}} \mu = \operatorname{Tr}(\mu_{++} + \mu_{--}).
\]
By \cite[Proposition~2.1]{GO10}, for $\mu \in (\mathfrak{gl}_{\rm{res}})_*$ 
and $A \in \mathfrak{gl}_{\rm{res}}$, both $\mu A$ and $A \mu$ belong to 
$(\mathfrak{gl}_{\rm{res}})_*$ and $\operatorname{Tr}_{\rm{res}}(\mu A) = 
\operatorname{Tr}_{\rm{res}}(A \mu)$. Consequently 
(see  \cite[formula (2.36)]{GO10})
\begin{equation}
\operatorname{Tr}_{\rm{res}} g \mu g^{-1} = \operatorname{Tr}_{\rm{res}} \mu,
\end{equation}
for any $\mu \in  (\mathfrak{gl}_{\rm{res}})_*$ and any 
$g \in \operatorname{GL}_{\rm{res}}$.
The strongly non-degenerate pairing
\begin{equation}
\!\!\!
\begin{array}{cll}
(\mathfrak{gl}_{\rm{res}})_*\times\mathfrak{gl}_{\rm{res}}& 
\longrightarrow & \mathbb{C},\\
\quad (\mu,A)& \longmapsto& 
\begin{array}{ll} \operatorname{Tr}_{\rm{res}}(\mu A) 
& = \operatorname{Tr}(\mu_{++} A_{++}) + \operatorname{Tr}(\mu_{+-}A_{-+}) \\ 
& + \operatorname{Tr}(\mu_{-+}A_{+-}) + \operatorname{Tr}(\mu_{--} A _{--})\end{array},
\end{array}
\end{equation}
induces an isometric isomorphism of complex Banach spaces
\[
\left((\mathfrak{gl}_{\rm{res}})_*\right)^*
\cong\mathfrak{gl}_{\rm{res}},
\]
i.e., $(\mathfrak{gl}_{\rm{res}})_*$ is a 
predual space of $\mathfrak{gl}_{\rm{res}}$.
Let us define the following duality pairing between $
\left(\mathfrak{gl}_{\rm res}\right)_*\oplus \,\mathbb{C}$ and 
$\mathfrak{gl}_{\rm res} \oplus \,\mathbb{C}$
\begin{equation}\label{pairing}
\begin{array}{lcll}
\langle\cdot, \cdot\rangle : & 
\left((\mathfrak{gl}_{\rm{res}})_*\oplus \, \mathbb{C}\right)
\times\left(\mathfrak{gl}_{\rm{res}}\oplus \, \mathbb{C})\right)& 
\longrightarrow & \mathbb{C},\\
& \quad \left((\mu,\gamma), (B, b)\right)& \longmapsto& 
\operatorname{Tr}_{\rm{res}}(\mu B) + \gamma b.
\end{array}
\end{equation}

\begin{prop}
With respect to the pairing~\eqref{pairing}, 
the coadjoint action 
of $\widetilde{\operatorname{GL}_{\rm{res}}}$ on 
$\left(\widetilde{\mathfrak{gl}_{\rm{res}}}\right)_*
:=\left(\mathfrak{gl}_{\rm{res}}\right)_*\oplus\mathbb{C}$ 
is given by
\begin{equation}\label{action coaff = action co}
\operatorname{Ad}^*_\Gamma(\mu,\gamma)=
( a ^{-1} \mu a-\gamma\sigma(a^{-1}),\gamma),
\end{equation}
where $\Gamma\in\widetilde{\operatorname{GL}_{\rm{res}}}$ projects 
to $a\in\operatorname{GL}_{\rm{res}}$ and $\sigma : 
\operatorname{GL}_{\rm{res}}\to
\left(\mathfrak{gl}_{\rm{res}}\right)_*$ is given by
\[
\sigma(a)=ada^{-1}-d.
\]
The infinitesimal coadjoint action of 
$\widetilde{\mathfrak{gl}_{\rm{res}}}$ is given for 
$(A,\lambda )\in\widetilde{\mathfrak{gl}_{\rm{res}}}$ and 
$(\mu,\gamma)\in\left(\widetilde{\mathfrak{gl}_{\rm{res}}}\right)_*$ 
by
\begin{equation}\label{ad_star}
\operatorname{ad}^*_{(A,\lambda )}(\mu,\gamma)=
\left([\mu,A]-\gamma[d,A],0\right).
\end{equation} 
\end{prop}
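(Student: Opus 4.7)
The plan is to first derive the infinitesimal formula for $\operatorname{ad}^*$ by direct computation from the bracket on $\widetilde{\mathfrak{gl}_{\rm res}}$, and then obtain the group formula by verifying that the proposed right-hand side is a group action with the correct derivative at the identity.

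For the infinitesimal coadjoint action, I would expand the defining identity
\[
\langle \operatorname{ad}^*_{(A,\lambda)}(\mu,\gamma), (B,\nu) \rangle
= \langle (\mu,\gamma), [(A,\lambda),(B,\nu)] \rangle
= \operatorname{Tr}_{\rm res}(\mu[A,B]) + \gamma\, s(A,B).
\]
The cyclicity of $\operatorname{Tr}_{\rm res}$ quoted from Proposition~2.1 of \cite{GO10} rewrites $\operatorname{Tr}_{\rm res}(\mu[A,B])$ as $\operatorname{Tr}_{\rm res}([\mu,A]B)$. Using that $[d,A]$ has vanishing diagonal blocks and off-diagonal blocks $\pm 2i A_{\pm \mp}$, a block-wise computation of $[d,A]B$ yields $s(A,B) = -\operatorname{Tr}_{\rm res}([d,A]B)$. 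Collecting,
\[
\langle \operatorname{ad}^*_{(A,\lambda)}(\mu,\gamma), (B,\nu) \rangle
= \operatorname{Tr}_{\rm res}\bigl(([\mu,A] - \gamma[d,A])\, B\bigr),
\]
which gives~\eqref{ad_star}. The element $[\mu,A] - \gamma[d,A]$ lies in $(\mathfrak{gl}_{\rm res})_*$ by the ideal property of the restricted trace class (for $[\mu,A]$) and the vanishing of the diagonal blocks of $[d,A]$.

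For the group formula, first check that $\sigma(a) = ada^{-1} - d$ lies in $(\mathfrak{gl}_{\rm res})_*$. Writing $a$ and $a^{-1}$ in block form, one finds
\[
\sigma(a)_{++} = -2i\, a_{+-}(a^{-1})_{-+}, \qquad \sigma(a)_{--} = 2i\, a_{-+}(a^{-1})_{+-},
\]
both trace class as products of two Hilbert--Schmidt operators. The key identity is the 1-cocycle property
\[
\sigma(a_1 a_2) = \sigma(a_1) + a_1 \sigma(a_2) a_1^{-1},
\]
immediate from the definition, whose consequence $\sigma((a_1a_2)^{-1}) = a_2^{-1}\sigma(a_1^{-1})a_2 + \sigma(a_2^{-1})$ is exactly what is needed to verify that
\[
\Psi(\Gamma)(\mu,\gamma) := (a^{-1}\mu a - \gamma\sigma(a^{-1}),\, \gamma)
\]
defines a group action. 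Note that $\Psi$ depends only on $a$, in agreement with the fact that the central $\mathbb{C}^\times$ acts trivially via $\operatorname{Ad}$, so $\operatorname{Ad}^*_\Gamma$ factors through $\operatorname{GL}_{\rm res}$.

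Finally, differentiating $\Psi(\exp t(A,\lambda))(\mu,\gamma)$ at $t=0$, using the expansion $\sigma(e^{-tA}) = t[d,A] + O(t^2)$, reproduces the infinitesimal formula $([\mu,A]-\gamma[d,A], 0)$ established above. Since $\Psi$ and $\operatorname{Ad}^*$ are smooth group actions whose derivatives agree at the identity, they coincide on the connected component $\widetilde{\operatorname{GL}^0_{\rm res}}$. The main obstacle is then the extension to the full group $\widetilde{\operatorname{GL}_{\rm res}} = \mathbb{Z}\ltimes \widetilde{\operatorname{GL}^0_{\rm res}}$: since the cocycle identity for $\sigma$ is valid throughout $\operatorname{GL}_{\rm res}$, $\Psi$ is manifestly a well-defined group action on all of $\widetilde{\operatorname{GL}_{\rm res}}$, and compatibility with the semidirect product structure reduces to a direct verification on the generator induced by the shift operator $r$.
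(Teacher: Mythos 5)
Your proposal is correct in substance, but it takes a genuinely different route from the paper on the identity component. The paper works directly at the group level: it computes the adjoint action on the quotient $\mathcal{F}/\alpha(\operatorname{SL}_1(\mathcal{H}_+))$, namely $\operatorname{Ad}_{[a,b]_\alpha}[B,q]_\alpha=[aBa^{-1},bqb^{-1}]_\alpha$, and its key computational step is the trace identity $\operatorname{Tr}\bigl((aBa^{-1})_{++}-bqb^{-1}\bigr)=\operatorname{Tr}(B_{++}-q)+\frac{1}{2i}\operatorname{Tr}_{\rm res}\bigl(\sigma(a^{-1})B\bigr)$, which transports $\operatorname{Ad}$ through the identification $[B,q]_\alpha\mapsto\bigl(B,-2i\operatorname{Tr}(B_{++}-q)\bigr)$ and is then dualized; the infinitesimal formula is obtained only at the end by differentiation. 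You reverse the order: you compute $\operatorname{ad}^*$ first from the stated Schwinger-cocycle bracket (your identity $s(A,B)=-\operatorname{Tr}_{\rm res}([d,A]B)$ and the blocks $\sigma(a)_{++}=-2i\,a_{+-}(a^{-1})_{-+}$, $\sigma(a)_{--}=2i\,a_{-+}(a^{-1})_{+-}$ all check out), then integrate using the $1$-cocycle identity for $\sigma$ and the principle that two smooth homomorphisms of a connected Banach Lie group with equal differentials at the identity coincide. The paper's route is more self-contained: it never invokes the bracket formula for $\widetilde{\mathfrak{gl}_{\rm res}}$ and in effect re-derives it, whereas your argument takes that formula as an input from the preceding section. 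Your route confines the hard analysis to the infinitesimal trace identity and makes explicit the cocycle property of $\sigma$ that the paper leaves implicit; if you flesh it out, justify the uniqueness principle in this Banach setting, e.g.\ by noting that $t\mapsto\operatorname{Ad}^*_{\exp(t(A,\lambda))}(\mu,\gamma)$ and $t\mapsto\Psi(\exp(t(A,\lambda)))(\mu,\gamma)$ solve the same affine ODE $\dot x=\operatorname{ad}^*_{(A,\lambda)}x$ with the same initial condition, and that $\exp$ generates $\widetilde{\operatorname{GL}^0_{\rm res}}$.

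The one step you assert without performing is the verification on the shift generator, and it is the only place where the specific structure of $\widetilde{\operatorname{GL}_{\rm res}}=\mathbb{Z}\ltimes\widetilde{\operatorname{GL}^0_{\rm res}}$ enters; connectedness gives you nothing there. The paper carries it out: $\operatorname{Ad}_{(1,[\operatorname{id}_\mathcal{H},\operatorname{id}_{\mathcal{H}_+}]_\alpha)}(B,\nu)=\bigl(rBr^{-1},\nu-2i\langle e_{-1},Be_{-1}\rangle\bigr)$, the extra term coming from the $e_0$-corner in the definition of $\bar\Phi$, followed by the identification $2i\langle e_{-1},Be_{-1}\rangle=\operatorname{Tr}_{\rm res}\bigl(\sigma(r^{-1})B\bigr)$, where $\sigma(r^{-1})$ is rank one supported on $\mathbb{C}e_{-1}$. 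You have correctly localized this as the remaining obligation and it is indeed a direct computation, so I would not call it a gap in the plan, but as written it is the one claim in your proposal that is named rather than proved.
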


\begin{proof} The coadjoint action of the center of 
the extended group is trivial. Therefore, the coadjoint action 
of $\widetilde{\operatorname{GL}_{\rm res}}$ on the predual 
$\left(\widetilde{\mathfrak{gl}_{\rm{res}}}\right)_*$ descends 
to an action of the restricted general group 
$\operatorname{GL}_{\rm res}$. By \eqref{def_GL_ext}, it 
is sufficient to verify equation~\eqref{action coaff = action co}
for $\widetilde{\operatorname{GL}^0 _{\rm res}}$ and for the 
shift $r$. Passing to the quotients, the adjoint action of 
$\widetilde{\operatorname{GL}^0 _{\rm res}}$ on its Lie 
algebra reads
\[
\operatorname{Ad}_\Gamma ([B, q]_\alpha)  = [aBa^{-1}, bq b^{-1}]_\alpha ,
\]
where $\Gamma = [a, b]_\alpha \in 
\widetilde{\operatorname{GL}^0_{\rm res}}$ and 
$[B, q]_\alpha \in \widetilde{\mathfrak{gl}_{\rm res}}$. 
A direct computation shows that 
\[
\operatorname{Tr} \left(\left(aBa^{-1}\right)_{++} - b q b^{-1}\right)
= \operatorname{Tr} (B_{++} - q) + 
\frac{1}{2i}\operatorname{Tr}\left(\sigma(a^{-1})B \right) 
\]
with $\sigma(a) = a d a^{-1} - d$. Consequently 
$[aBa^{-1}, bq b^{-1}]_\alpha$ is identified with
\[
\left(aBa^{-1} , -2i \operatorname{Tr}(B_{++} - q) - 
\operatorname{Tr}\left( \sigma(a^{-1}) B\right) \right) 
\in \mathfrak{gl}_{\rm res}(\mathcal{H}) \oplus \mathbb{C},
\]
and $\Gamma = [a, b]_\alpha \in
\widetilde{\operatorname{GL}_{\rm res}^0}$ acts on 
$(B, \nu)\in \mathfrak{gl}_{\rm res}(\mathcal{H}) \oplus 
\mathbb{C}$ by
\begin{equation}\label{action_adjointe_ds_extension}
\operatorname{Ad}_\Gamma(B, \nu) = \left( aBa^{-1}, \nu - 
\operatorname{Tr}\left( \sigma(a^{-1})B \right) \right).
\end{equation}
Let us note that \eqref{action_adjointe_ds_extension} 
coincides with \cite[formula (2.27)]{GO10}.

Using the duality pairing \eqref{pairing} between 
$\left(\mathfrak{gl}_{\rm res}\right)_*\oplus \,
\mathbb{C}\ni ( \mu , \gamma )$ and 
$\mathfrak{gl}_{\rm res} \oplus \,\mathbb{C}\ni (B,b)$, 
one has
\begin{align*}
\left\langle\operatorname{Ad}^{*}_{\Gamma}(\mu, \gamma), 
(B, \nu)\right\rangle 
&= \left\langle(\mu, \gamma), \left(aBa^{-1} , 
\nu - \operatorname{Tr}\left( \sigma(a^{-1} )B \right) 
\right)\right\rangle\\
&= \operatorname{Tr}_{\rm{res}}\left( \left(a^{-1} \mu a - 
\gamma\sigma(a^{-1} )\right)B \right)  + \gamma \nu,
\end{align*} 
where $\Gamma=[g,h]_\alpha \in
\widetilde{\operatorname{GL}_{\rm res}^0}$. It 
follows that the coadjoint action of 
$\widetilde{\operatorname{GL}_{\rm res}^0}$ on the 
predual $\left(\mathfrak{gl}_{\rm res}\right) _*\oplus 
\mathbb{C}$ reads
\[
\operatorname{Ad}^{*}_{\Gamma}(\mu, \gamma) = 
\left(a^{-1} \mu a- \gamma\sigma(a^{-1} ), \gamma\right).
\]
This expression coincides with \cite[formula (2.50)]{GO10}. 

We now compute the coadjoint action of the full group 
$\widetilde{\operatorname{GL}_{\rm res}}$. It remains to 
compute the coadjoint action of the shift $r$ on 
$\left(\mathfrak{gl}_{\rm res}\right)_* \oplus \mathbb{C}$. 
From the general formula for the adjoint action of the 
semidirect product of two groups (see 
\cite[\S 6.4]{MaMiOrPeRa2007}), we get the adjoint 
action of $(1,  [\operatorname{id}_\mathcal{H},
\operatorname{id}_{\mathcal{H}_+}]_\alpha )\in \mathbb{Z} \,\ltimes\,
\widetilde{\operatorname{GL}_{\rm res}^0}$ on  
$\widetilde{\mathfrak{gl}_{\rm res}}$
\[
\operatorname{Ad}_{(1, [\operatorname{id}_\mathcal{H},
\operatorname{id}_{\mathcal{H}_+}]_\alpha )}
\left( [B, q]_\alpha  \right) = [ rBr^{-1}, \tilde{q}]_\alpha ,
\]
where $\tilde{q}:= r_{|\mathcal{H}_+} \circ q \circ 
\left(r_{|\mathcal{H}_+}\right)^{-1}$ on $r(\mathcal{H}_+)$ 
and $\tilde{q}:= 0$ on $\left(r_{|\mathcal{H}_+}(\mathcal{H}_+)
\right)^{\perp}=\mathbb{C}e_0$. By the identification 
$\widetilde{\mathfrak{gl}_{\rm res}} = \mathfrak{gl}_{\rm res}
\oplus \mathbb{C}$, one has for $(B, \nu)\in
\mathfrak{gl}_{\rm res}  \oplus \mathbb{C}$~:
\[
\operatorname{Ad}_{(1, [\operatorname{id}_\mathcal{H},
\operatorname{id}_{\mathcal{H}_+}]_\alpha )}(B, \nu) 
= \left( r Br^{-1}, \nu- 2i\langle e_{-1}, 
Be_{-1}\rangle \right) .
\]
By the duality pairing \eqref{pairing} between 
$\left(\mathfrak{gl}_{\rm res}\right)_*\oplus 
\mathbb{C}$ and $\mathfrak{gl}_{\rm res}\oplus \mathbb{C}$, 
one has
\begin{align*}
\langle\operatorname{Ad}^{*}_{(1, [\operatorname{id}_\mathcal{H},
\operatorname{id}_{\mathcal{H}_+}]_\alpha )}(\mu, \gamma), 
(B, \nu)\rangle &= 
\langle (\mu, \gamma), \operatorname{Ad}_{(1, [\operatorname{id}_\mathcal{H},
\operatorname{id}_{\mathcal{H}_+}]_\alpha )}(B, \nu)\rangle\\
&= \langle (\mu, \gamma), (r Br^{-1}, \nu - 2i\langle e_{-1}, 
Be_{-1}\rangle)\rangle \\
&=\operatorname{Tr}_{\rm{res}}\left( (r^{-1}\mu r) B \right)  
+ \gamma \nu -2i\gamma\langle e_{-1}, Be_{-1}\rangle.
\end{align*}
A direct computation yields 
\[
2i\langle e_{-1}, Be_{-1}\rangle = 
\operatorname{Tr}_{\rm{res}}\left( 
\left(r^{-1} d r-d\right)B\right)= 
\operatorname{Tr}_{\rm{res}} 
\left( \sigma ( r ^{-1} ) B \right),
\]
and hence
\[
\operatorname{Ad}^{*}_{(1, [\operatorname{id}_\mathcal{H},
\operatorname{id}_{\mathcal{H}_+}]_\alpha )}(\mu, \gamma) 
= \left(r^{-1}\mu r - \gamma\sigma(r ^{-1} ), \gamma\right).
\]
Formula \eqref{action coaff = action co} follows since 
it is verified for a set of generators of the group. 
Formula \eqref{ad_star} follows by taking its derivative.
\end{proof}

\begin{rem}{
The proposition implies that the coadjoint orbits of the central 
extension $\widetilde{\operatorname{GL}_{\rm{res}}}$ are affine 
coadjoint orbits of $\operatorname{GL}_{\rm res}$ on 
$\left(\widetilde{\mathfrak{gl}_{\rm{res}}}\right)_*$ for 
the following action:
 \begin{equation}\label{affine_coadjoint}
a\cdot\mu=a\mu a^{-1} -\gamma\sigma(a),
\end{equation}
where $\sigma : \operatorname{GL}_{\rm{res}}\to
\left(\mathfrak{gl}_{\rm{res}}\right)_*$ is the group 
1-cocycle given by
\[
\sigma(a)=ada^{-1}-d.
\]
More precisely, denoting by $\widetilde{\mathcal{O}}_{(\mu, \gamma)}$ 
the coadjoint orbit of the central extension 
$\widetilde{\operatorname{GL}_{\rm{res}}}$  through the 
element $(\mu, \gamma)\in
\left(\widetilde{\mathfrak{gl}_{\rm{res}}}\right)_*$ 
and by $\mathcal{O}^{d}_{\mu}$ the affine coadjoint orbit 
of the group $\operatorname{GL}_{\rm res}$ through the 
element $\mu\in \left(\mathfrak{gl}_{\rm{res}}\right)_*$, 
we have~:
\begin{equation}\label{tildeO=O}
\widetilde{\mathcal{O}}_{(\mu, \gamma)} 
= \mathcal{O}^{\gamma d}_{\mu} \oplus\{\gamma\}.
\end{equation}
}
\end{rem}

\subsection{The central extension of 
$\operatorname{Sp}_{\rm{res}}(\mathcal{V},\Omega)$} 
The central extension of $\operatorname{GL}_{\rm{res}}$ by 
$\mathbb{C}^\times$ restricts to a central extension 
$\widetilde{\operatorname{Sp}_{\rm{res}}}(\mathcal{V},\Omega)$ 
of the restricted linear symplectic group 
$\operatorname{Sp}_{\rm{res}}(\mathcal{V},\Omega)$ by 
$\mathbb{S}^1$. Indeed, for
\begin{equation}\label{notation}
A=
\begin{pmatrix}
A_1      &A_2\\
\bar{A}_2&\bar{A}_1
\end{pmatrix}
,\quad 
B=
\begin{pmatrix}
B_1      &B_2\\
\bar{B}_2&\bar{B}_1
\end{pmatrix}
\in\mathfrak{sp}_{\rm{res}}(\mathcal{V},\Omega),
\end{equation}
we have $s(A,B)=2i\operatorname{Tr}\left( \bar{A}_2B_2-
A_2\bar{B}_2\right) \in\mathbb{R}$. 

\section{The restricted Siegel disc as a coadjoint orbit}
\label{spres}

The Siegel disc $\mathfrak{D}(\mathcal{H})$ and 
the restricted Siegel disc $\mathfrak{D}_{\rm res}(\mathcal{H})$ have
been defined at the beginning of Section \ref{Siegel}. Recall from
Theorem \ref{disc restreint de Siegel = espace homogene} that
\begin{equation}
\label{formula_theorem_1_repeated}
\operatorname{U}(\mathcal{H}_+)\backslash
\operatorname{Sp}_{\rm res}(\mathcal{V},\Omega)
\longrightarrow \mathfrak{D}_{\rm res}(\mathcal{H}),\quad 
\left[
\begin{pmatrix}
g       &h\\
\bar{h} &\bar{g}\end{pmatrix}
\right]_{\operatorname{U}(\mathcal{H}_+)}\longmapsto h\bar{g}^{-1}
\end{equation}
is a diffeomorphism of real Banach manifolds.

Consider the coadjoint action of the central 
extension $\widetilde{\operatorname{Sp}_{\rm{res}}}(\mathcal{V},\Omega)$ 
on $\left(\widetilde{\mathfrak{sp}_{\rm{res}}}
(\mathcal{V},\Omega)\right)_* := 
\left(\mathfrak{sp}_{\rm{res}}(\mathcal{V},\Omega)\right)_*\oplus \mathbb{R}$ 
where
\[
\left(\mathfrak{sp}_{\rm{res}}(\mathcal{V},\Omega)\right)_*:=
\{\mu\in\mathfrak{sp}_{\rm{res}}(\mathcal{V}, \Omega)\mid 
p_\pm\mu|_{\mathcal{H}_\pm}\in L^1(\mathcal{H}_\pm)\},
\]
induced by the strongly non-degenerate pairing
\begin{equation*}
\begin{array}{lcll}
&\left(\left(\mathfrak{sp}_{\rm{res}}(\mathcal{V},\Omega)\right)_* 
\oplus \mathbb{R}\right)\times
\left(\mathfrak{sp}_{\rm{res}}(\mathcal{V},\Omega)\oplus \mathbb{R}\right)& 
\longrightarrow & \mathbb{R},\\
&\left((\mu,\gamma), (B, \nu)\right)& \longmapsto 
& \operatorname{Tr}_{\rm{res}}(\mu B) + \gamma \nu.
\end{array}
\end{equation*}
As in section~\ref{section_central_gl}, since the extension 
$\widetilde{\operatorname{Sp}_{\rm{res}}}(\mathcal{V},\Omega)$ 
of $\operatorname{Sp}_{\rm{res}}(\mathcal{V},\Omega)$ is central, 
the coadjoint action of 
$\widetilde{\operatorname{Sp}_{\rm{res}}}(\mathcal{V},\Omega)$ on 
$\left(\mathfrak{sp}_{\rm{res}}(\mathcal{V},\Omega)\right)_* \oplus \mathbb{R}$ 
descends to an affine coadjoint action of 
$\operatorname{Sp}_{\rm{res}}(\mathcal{V},\Omega)$  on 
$\left(\mathfrak{sp}_{\rm{res}}(\mathcal{V},\Omega)\right)_* \oplus \mathbb{R}$.

The isotropy group of $(0,\gamma)\in
\left(\widetilde{\mathfrak{sp}_{\rm{res}}}(\mathcal{V},\Omega)\right)_* 
=\left(\mathfrak{sp}_{\rm{res}}(\mathcal{V},\Omega)\right)_*
\oplus\mathbb{R}$, for $\gamma\neq 0$ is given by
\[
\operatorname{Sp}_{\rm{res}}(\mathcal{V},\Omega)_{(0,\gamma)}
=\left\{\left.
\begin{pmatrix}
g &0\\
0 &\bar{g}\end{pmatrix}
\right| g\in\operatorname{U}(\mathcal{H}_+)\right\}
\cong \operatorname{U}(\mathcal{H}_+),
\]
where $\operatorname{U}(\mathcal{H}_+)$ denotes the unitary 
group of $\mathcal{H}_+$. It is a closed Lie subgroup of 
$\operatorname{Sp}_{\rm{res}}(\mathcal{V},\Omega)$
and the quotient space $\operatorname{U}(\mathcal{H}_+)
\backslash\operatorname{Sp}_{\rm{res}}(\mathcal{V},\Omega)$ 
is naturally endowed with a Hilbert manifold structure.

Let $\mathcal{O}_{(0,\gamma)}$ be the coadjoint orbit of 
$(0,\gamma)\in\left(\widetilde{\mathfrak{sp}_{\rm{res}}}
(\mathcal{V},\Omega)\right)_*$, $\gamma\neq 0$, endowed with 
the Hilbert manifold structure induced by the diffeomorphism
\begin{equation}\label{diffeo2}
\begin{array}{lcll}
&\operatorname{U}(\mathcal{H}_+) \backslash 
\operatorname{Sp}_{\rm{res}}(\mathcal{V},\Omega)& 
\longrightarrow & \mathcal{O}_{(0,\gamma)},\\
&[a]_{\operatorname{U}(\mathcal{H}_+)}& \longmapsto 
& (-\gamma\sigma(a),\gamma)=
\left( -\gamma (ada^{-1}-d),\gamma\right).
\end{array}
\end{equation}
The tangent space at $(0,\gamma)\in\mathcal{O}_{(0,\gamma)}$ 
is given by
\[
T_{(0,\gamma)}\mathcal{O}_{(0, \gamma)}=\{-\gamma [A, d]\mid 
A\in\mathfrak{sp}_{\rm{res}}(\mathcal{V},\Omega)\}.
\]
The Kirillov-Kostant-Souriau symplectic form $\omega_\gamma$ 
on $\mathcal{O}_{(0,\gamma)}$ is given at the point $(0,\gamma)$ by
\begin{equation}
\label{KKS_value}
\omega_\gamma(0,\gamma)\left(-\gamma[A,d],-\gamma[B,d]\right)
=-\gamma\operatorname{Tr}(A[d,B]).
\end{equation} 
The tangent map of the diffeomorphism \eqref{diffeo2} at 
$[\operatorname{id}_{\mathcal{H}}]$ is given by
\[
\mathfrak{u}(\mathcal{H}_+)\backslash
\mathfrak{sp}_{\rm{res}}(\mathcal{V},\Omega)
\longrightarrow T_{(0,\gamma)}\mathcal{O}_{(0,\gamma)},
\quad  [A]_{\mathfrak{u}(\mathcal{H}_+)}  
\longmapsto -\gamma[A,d].
\]
More explicitly, using the notations \eqref{notation}, we have
\[\quad \left[
\begin{pmatrix}
A_1      &A_2\\
\bar{A}_2&\bar{A}_1\end{pmatrix} 
\right]_{\mathfrak{u}(\mathcal{H}_+)}\longmapsto 2i\gamma
\begin{pmatrix}
0        &A_2\\
-\bar{A}_2&0\end{pmatrix}.
\]
The value at 
$[\operatorname{id}_{\mathcal{H}}]_{\operatorname{U}(\mathcal{H}_+)}
\in  \operatorname{U}(\mathcal{H}_+)\backslash
\operatorname{Sp}_{\rm{res}}(\mathcal{V},\Omega)$ of the 
pull back $\widehat{\omega}_\gamma$ 
of the symplectic form $\omega_\gamma$
to $\operatorname{U}(\mathcal{H}_+) \backslash 
\operatorname{Sp}_{\rm{res}}(\mathcal{V},\Omega)$
by the diffeomorphism \eqref{diffeo2} has the expression
\begin{equation}\label{omega1}
\widehat{\omega}_\gamma\left( 
[\operatorname{id}_{\mathcal{H}}]_{\operatorname{U}(\mathcal{H}_+)}\right) 
\left( [A]_{ \mathfrak{u}( \mathcal{H} _+)},
[B]_{ \mathfrak{u}( \mathcal{H} _+)}\right) 
=-\gamma\operatorname{Tr}(A[d,B]),
\end{equation}
or, more explicitly,
\begin{multline}\label{omega_gamma}
\widehat{\omega}_\gamma\left( 
[\operatorname{id}_{\mathcal{H}}]_{\operatorname{U}(\mathcal{H}_+)}\right) 
\left(\left[
\begin{pmatrix}
A_1      &A_2\\
\bar{A}_2&\bar{A}_1\end{pmatrix} 
\right]_{ \mathfrak{u}( \mathcal{H} _+)},\left[
\begin{pmatrix}
B_1      &B_2\\
\bar{B}_2&\bar{B}_1\end{pmatrix} 
\right]_{ \mathfrak{u}( \mathcal{H} _+)}\right)
\\
=-2i\gamma\operatorname{Tr}(\bar{A}_2B_2-\bar{B}_2A_2).
\end{multline}
We summarize this discussion in the following Theorem.

\begin{thm}
The $\widetilde{\operatorname{Sp}_{\rm{res}}}(\mathcal{V},\Omega)$
coadjoint orbit 
\[
(\mathcal{O}_{(0,\gamma)}, \omega_\gamma) \quad\text{of}\quad 
(0,\gamma)\in\left(\widetilde{\mathfrak{sp}_{\rm{res}}}
(\mathcal{V},\Omega)\right)_*,\quad \gamma\neq 0
\]
 and the
restricted Siegel disc 
\[
\left(\mathfrak{D}_{\rm{res}}(\mathcal{H})
\cong \operatorname{U}(\mathcal{H}_+) \backslash 
\operatorname{Sp}_{\rm{res}}(\mathcal{V},\Omega), 
\widehat{\omega}_\gamma\right)
\]
 {\rm(}see 
\eqref{formula_theorem_1_repeated}{\rm)}  
are symplectically diffeomorphic
via the diffeomorphism \eqref{diffeo2}. The value
of $\omega_\gamma$ at $(0, \gamma)$ is given by
\eqref{KKS_value} and the value of
$\widehat{\omega}_\gamma$ at 
$[\operatorname{id}_{\mathcal{H}}]_{\operatorname{U}(\mathcal{H}_+)}
\in \operatorname{U}(\mathcal{H}_+)\backslash
\operatorname{Sp}_{\rm{res}}(\mathcal{V},\Omega)$ is given by
\eqref{omega1} or \eqref{omega_gamma}.
\end{thm}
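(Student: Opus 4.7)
The plan is to assemble three ingredients already prepared in the preceding exposition and verify that the symplectic forms match under the composite identification. First, I would establish the diffeomorphism \eqref{diffeo2} by computing the isotropy group of $(0,\gamma)$ under the affine coadjoint action $a\cdot(\mu,\gamma)=(a\mu a^{-1}-\gamma\sigma(a),\gamma)$. Since $\mu=0$, the stabilizer condition reduces to $\sigma(a)=ada^{-1}-d=0$, i.e.\ $a$ commutes with the grading operator $d=i(p_+-p_-)$, equivalently $a$ is block-diagonal with respect to $\mathcal{H}=\mathcal{H}_+\oplus\mathcal{H}_-$. Combined with the symplectic condition \eqref{symplectic_conditions}, the diagonal block $g$ must satisfy $g^*g=\operatorname{id}_{\mathcal{H}_+}$, so the stabilizer is precisely $\operatorname{U}(\mathcal{H}_+)$ embedded as $\operatorname{diag}(g,\bar g)$. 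This yields \eqref{diffeo2}, and composing with the diffeomorphism of Theorem~\ref{disc restreint de Siegel = espace homogene} recalled in \eqref{formula_theorem_1_repeated} gives the desired identification between $\mathcal{O}_{(0,\gamma)}$ and $\mathfrak{D}_{\rm res}(\mathcal{H})$.

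Second, I would verify the value of $\omega_\gamma$ at $(0,\gamma)$ by invoking the standard Kirillov--Kostant--Souriau construction on the coadjoint orbit of the central extension $\widetilde{\operatorname{Sp}_{\rm res}}(\mathcal{V},\Omega)$. For $(A,\lambda),(B,\nu)\in\widetilde{\mathfrak{sp}_{\rm res}}(\mathcal{V},\Omega)$, applying \eqref{ad_star} at the base point gives the tangent vectors $-\gamma[d,A]$ and $-\gamma[d,B]$ in $T_{(0,\gamma)}\mathcal{O}_{(0,\gamma)}$, while the KKS definition together with the bracket on the extension (containing the Schwinger cocycle $s$) reads
\[
\omega_\gamma(0,\gamma)\!\left(\operatorname{ad}^*_{(A,\lambda)}(0,\gamma),\operatorname{ad}^*_{(B,\nu)}(0,\gamma)\right)
=-\bigl\langle(0,\gamma),\,([A,B],s(A,B))\bigr\rangle=-\gamma\,s(A,B),
\]
which by the explicit form of $s$ on $\mathfrak{sp}_{\rm res}$ computed before Section~\ref{spres} is exactly $-\gamma\operatorname{Tr}(A[d,B])$, yielding \eqref{KKS_value}.

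Third, I would pull $\omega_\gamma$ back via \eqref{diffeo2} using the tangent map $[A]_{\mathfrak{u}(\mathcal{H}_+)}\mapsto-\gamma[A,d]$ at $[\operatorname{id}_{\mathcal{H}}]_{\operatorname{U}(\mathcal{H}_+)}$ to obtain \eqref{omega1}; the explicit block form \eqref{omega_gamma} then follows by substituting the matrix representatives \eqref{notation} into $\operatorname{Tr}(A[d,B])$ and using that $[d,B]$ acts as $2i$ on the off-diagonal blocks. The main obstacle will be the bookkeeping of signs and of the centrality of the extension: one must check that the second component of the coadjoint orbit contributes only through the cocycle (so the KKS form truly descends to a form on $\operatorname{U}(\mathcal{H}_+)\backslash\operatorname{Sp}_{\rm res}(\mathcal{V},\Omega)$ and agrees with $\widehat\omega_\gamma$), and that the map $[A]_{\mathfrak{u}(\mathcal{H}_+)}\mapsto-\gamma[A,d]$ is a well-defined isomorphism onto $T_{(0,\gamma)}\mathcal{O}_{(0,\gamma)}$, which amounts to re-verifying that the kernel is the stabilizer subalgebra $\mathfrak{u}(\mathcal{H}_+)$. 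Once these compatibilities are in place, the coincidence of $\omega_\gamma$ with $\widehat\omega_\gamma$ is automatic and the theorem follows.
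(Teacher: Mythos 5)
Your proposal is correct and follows essentially the same route as the paper, whose ``proof'' is precisely the discussion preceding the theorem: identify the isotropy group of $(0,\gamma)$ as $\operatorname{U}(\mathcal{H}_+)$ to get \eqref{diffeo2}, evaluate the Kirillov--Kostant--Souriau form via the Schwinger cocycle to get \eqref{KKS_value}, and pull back through the tangent map $[A]_{\mathfrak{u}(\mathcal{H}_+)}\mapsto-\gamma[A,d]$ to get \eqref{omega1} and \eqref{omega_gamma}. The sign and block-diagonal bookkeeping you flag does work out exactly as you anticipate (e.g.\ $[d,B]$ multiplies the off-diagonal blocks by $\pm 2i$, and bilinearity absorbs the discrepancy between $-\gamma[d,A]$ and $-\gamma[A,d]$).
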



\begin{thebibliography}{00}

\bibitem{Bo1972} Bourbaki, N. [1972], \textit{Groupes et alg\`ebres de
    Lie.  Chapitres 2 et 3}. Herman, Paris.

\bibitem{GO10} Goli\'nski, T. and A. Odzijewicz [2010], Hierarchy of
  Hamiltonian equations on Banach Lie-Poisson spaces related to
  restricted Grassmannian. \textit{J. Funct. Anal} \textbf{258}(10),
  3266--3294.

\bibitem{MaMiOrPeRa2007} Marsden, J., G.~Misiolek, J.~P. Ortega,
  M.~Perlmutter, and T.S.~Ratiu [2007], \emph{Hamiltonian Reduction by
    Stages}, Springer Lecture Notes in Mathematics, Volume {\bf 1913},
  Springer-Verlag, Heidelberg.

\bibitem{Ne2002} Neeb, K.-H. [2002], A Cartan-Hadamard theorem for
  Banach-Finsler manifolds, \textit{Geom. Dedicata}, \textbf{95},
  115--156.

\bibitem{Pa1963} Palais, R.~S. [1963], \textit{Seminar on the
    Atiyah-Singer Index Theorem}, Princeton Univ. Press, Princeton,
  N. J., 1963.

\bibitem{PrSe1990} Pressley, A. and G. Segal [1990], \textit{Loop
    Groups}, Oxford Mathematical Monographs. Oxford Science
  Publications. The Clarendon Press, Oxford University Press, Oxford.

\bibitem{RS1} Reed, M. and B. Simon [1980], {\it Functional Analysis
    (Methods of Modern Mathematical Physics Vol. I)}, Academic Press.

\bibitem{Si1979} Simon, B. [1979], \textit{Trace Ideal and their
    Applications}, Cambridge University Press, Cambridge.

\bibitem{Up85} Upmeier, H. [1985], \textit{Symmetric Banach manifolds
    and Jordan $C^*$-algebras}, North-Holland, Amsterdam.

\bibitem{TaTe2004} Takhtajan, L. and L.-P. Teo [2004], Weil-Petersson
  metric on the universal Teichm\"uller space,
  \textit{Mem. Amer. Math. Soc.}, \textbf{183}, no.861, viii+119pp.


\bibitem{Tum17} Tumpach, A.B. [2019], An example of Banach and Hilbert
  manifold: the universal Teichm\"uller space, in \textit{Proceedings
    of XXXVI Workshop on Geometric Methods in Physics, 2-8 July 2017,
    Bia{\l}owie\.za, Poland}, Trends Math., P. Kielanowski,
  A. Odzijewicz, E. Previato eds., Birkh\"auser/Springer, Cham, 2019,
  411--418.
  
\bibitem{Wu2001} Wurzbacher, T. [2001], Fermionic second quantization
  and the geometry of the restricted Grassmannian, in \textit{Infinite
    Dimensional K\"ahler Manifolds}, DMV Seminar, Volume \textbf{31},
  Birk\"auser Verlag, Basel, 287--375.

\end{thebibliography}
\end{document}